\newtheorem{thm}{Theorem}[section]
\newtheorem{prop}[thm]{Proposition}
\newtheorem{lemma}[thm]{Lemma}
\newtheorem{corollary}[thm]{Corollary}
\newtheorem{assumption}[thm]{Assumption}
\theoremstyle{definition}
\theoremstyle{remark}
\newtheorem{remark}[thm]{\bf Remark}
\numberwithin{equation}{section}
\renewcommand*\env@cases[1][1.6]{%
  \let\@ifnextchar\new@ifnextchar
  \left\lbrace
  \def\arraystretch{#1}%
  \array{@{}l@{\quad}l@{}}%
}
\newcommand{\norm}[1]{\mbox{$\left\| #1 \right\|$}}
\newcommand{\sprod}[2]{\mbox{$\left\langle {\,#1},{\, #2} \right\rangle$}}
\newcommand{\N}{\mathbb{N}}
\newcommand{\R}{\mathbb{R}}
\newcommand{\df}{\, \mathrm{d}}
\newcommand{\menge}[2]{\left\{ \, {#1}  \ \big{|} \ {#2} \, \right\}}
\begin{document}

\title[Hardy inequalities for the Heisenberg Laplacian]{Hardy inequalities for the Heisenberg Laplacian on convex bounded polytopes}


\author{Bartosch Ruszkowski}
\address{Bartosch Ruszkowski, Institute of Analysis, Dynamics
and Modeling, Universit\"at Stuttgart, 
PF 80 11 40, D-70569  Stuttgart, Germany}
\email{Bartosch.Ruszkowski@mathematik.uni-stuttgart.de}

\begin{abstract}
We prove a Hardy-type inequality for the gradient of the Heisenberg Laplacian on open bounded convex polytopes on the first Heisenberg Group. The integral weight of the Hardy  inequality is given by the distance function to the boundary measured with respect to the Carnot-Carath\'{e}odory metric. The constant depends on the number of hyperplanes, given by the boundary of the convex polytope, which are not orthogonal to the hyperplane $x_3=0$.  
\end{abstract}
\maketitle

\section{\bf Introduction}
\noindent Consider the first Heisenberg group given by $\R^3$, equipped with the group law 
\begin{equation} \label{group-law}
(x_1,x_2,x_3)\boxplus (y_1,y_2,y_3) \ := \ \Big{ (x_1+y_1,x_2+y_2, x_3+y_3- \frac{1}{2}\, (x_1 y_2-x_2 y_1)\Big )},
\end{equation}
and the sub-gradient $\nabla_{\mathbb{H}}:=(X_1,X_2)$ given by
\begin{align*}
X_1  \ :=  \ \partial_{x_1}+\frac{x_2}{2}\partial_{x_3}, \quad X_2 \ := \ \partial_{x_2}-\frac{x_1}{2}\partial_{x_3},
\end{align*} 
for $x:=(x_1,x_2,x_3)\in \mathbb{R}^3$. We recall that the vector fields $X_1, X_2, X_3:=[X_2,X_1] = \partial_{x_3} $ form a basis of the Lie algebra of left-invariant vector fields on $\mathbb{H}$ and that the sub-elliptic operator
$$\Delta_{\mathbb{H}}:=-X_1^2-X_2^2 $$
is the Heisenberg Laplacian
, also called Kohn Laplacian. There is a considerable amount of literature concerning the Hardy-type inequality 
\begin{align} \label{introkaplanhardy}
\int_{\mathbb{H}} \frac{|u(x)|^{2}}{ \norm{x}_{\mathbb{H}}^{4}} \, (x_1^2+x_2^2)  \df x \ \leq  \  \int_{\mathbb{H}}  \left|{\nabla_{\mathbb{H}}}\, u(x)\right|^{2} \df x \qquad u\in C_0^{\infty}(\mathbb{H}\setminus \{0\}),
\end{align}
where 
$$
\norm{x}^4_{\mathbb{H}}:= (x_1^{2}+x_2^{2})^{2}+16x_3^{2}\, .
$$
For the proof of \eqref{introkaplanhardy} we refer to \cite{garofalo, aermark,niu}, see also various improvements obtained in \cite{dambrozio, xiao}. The anisotropic norm $\norm{x}_{\mathbb{H}}$, which appears in \eqref{introkaplanhardy}, is referred to in the literature as Kor\'{a}nyi-Folland gauge or Kaplan gauge. For the sake of brevity we will use the latter notation and call it Kaplan gauge. 

\smallskip

In this paper we deal with sharp Hardy inequalities for the Heisenberg-Laplacian on bounded domains. In particular we consider the following problem; given an open bounded domain $\Omega \subset \mathbb{R}^3$, we would like to find the best constant $c$ for which the inequality
\begin{align}\label{ersteungleichung}
\int_{\Omega} \frac{|u(x)|^{2}}{\delta_C(x)^{2}} \df x \ \leq \ c^2 \int_{\Omega}   \left|{\nabla_{\mathbb{H}}}\, u(x)\right|^{2} \df x,
\end{align}
holds for all $u\in C_{0}^{\infty}(\Omega)$, where $\delta_C(x)$ is the Carnot-Carath\'{e}odory distance (C-C distance in the sequel) between $x$ and the boundary of $\Omega$, see section \ref{sec-prelim} for its definition. For more details on the C-C distance we refer to \cite{calin}, \cite{capogna}. With respect to the well-studied inequality \eqref{introkaplanhardy}, it is less known about the validity of  \eqref{ersteungleichung}, especially if one is interested in explicit constants. In \cite{danielli} the authors proved that for every $\Omega$ with a $C^{1,1}$ regular boundary there exists $c>0$ such that \eqref{ersteungleichung} holds true. Later it was shown by Yang, \cite{yang} that if $\Omega$ is a ball with respect to the C-C distance, then \eqref{ersteungleichung} holds with $c=2$.

\smallskip 

The fundamental problem of deriving inequalities of the form \eqref{ersteungleichung} lies in the fact, that we a priori don't know much about domains, which are the most natural ones for a Hardy inequality on $\mathbb{H}$. In comparison to the Euclidean setting it is well-known that if $\Omega$ is {\em convex} then
\begin{align}\label{euklidischeungleichung}
\int_{\Omega} \frac{|u(x)|^{2}}{{\rm dist}(x,\partial\Omega)^{2}}\, \df x \ \leq \ 4 \int_{\Omega}   \left|{\nabla}u(x)\right|^{2} \df x
\end{align}
holds for all $u\in C_0^\infty(\Omega)$, and the constant $4$ is sharp independently of $\Omega$, see 
e.g.~\cite{ancona, kufner, davies,marcus, avkhadiev,avkhadievv,hoffmann}. 

In this paper we prove that for open bounded convex polytopes $\Omega$ we obtain a constant depending on the number of hyperplanes of $\partial \Omega$, which are not orthogonal to the hyperplane $x_3=0$. Under an additional geometrical assumption the constant in \eqref{ersteungleichung} for convex poltytops can be improved, see Theorem \ref{assumpconvexpol1}. It is even possible to show that for any $c>2$ there exists an open bounded convex domain such that \eqref{ersteungleichung} is fulfilled, which is an almost sharp result since we prove that for any bounded domain $\Omega$ it holds
\begin{align}\label{approximatingsequence}
\inf\limits_{u\in  C_{0}^{\infty}(\Omega)} \frac{\int_{\Omega} |\nabla_{\mathbb{H}}\, u(x)|^{2} \df x}{\int_{\Omega}  {|u(x)|^{2}}{{\delta_C(x)^{-2} \df x}}} \leq \frac 14.
\end{align}
This shows that at least some convex domains are more compatible with the Heisenberg group structure than we expect them to be.

In \cite{luan} Luan and Yang proved on the half-space $\Omega:=\{x\in\mathbb{H} | x_3>0\}$ that for any $u\in C_0^\infty(\Omega)$ holds 
\begin{align}\label{geometriclarson}
 \int_{\Omega}  \frac{x_1^2+x_2^2}{4x_3^2}|u(x)|^2 \df x \leq  4\int_{\Omega} |\nabla_{\mathbb{H}}u(x)|^{2} \df x.
\end{align}
This result was recently generalized by Larson \cite{larson} to any bounded convex domain. Under an additional convexity condition, where $H(x)$ denotes the tangent plane to $x$, we can replace the weight on the left-hand side by
\begin{align}\label{reduceddistance}
 \omega(x) &:=  \inf\limits_{y\in \partial\Omega\cap H(x)}  d_{C}(x,y),
\end{align}
see Theorem \ref{mainresultinthree}. This result turns out to be \eqref{geometriclarson} for the case of the half-space.
 
The paper is organized as follows. In the next section we introduce necessary notation. Main results are formulated in section \ref{sec-main} and the proof of each Theorem is done in a separate section.

\section{\bf Preliminaries and notation}
\label{sec-prelim}

\noindent  The \textit{tangent plane} to $x:=(x_1,x_2,x_3)\in\mathbb{H}$ is given by
\begin{equation} \label{tangentplane}
\begin{split}
H(x):=& \menge{y\in \mathbb{H}}{\sprod{
 \left(-\frac{x_2}{2}, \frac{x_1}{2},1\right)
}{y-x}=0},\\
=& \menge{y\in \mathbb{H}}{x_1y_2-x_2y_1=2(x_3-y_3)},
\end{split}
\end{equation}
where $\sprod{\cdot}{\cdot}$ is the Euclidean scalar product in $\R^3$.
\smallskip

\noindent Let us briefly recall the definition of the C-C distance $d_C(x,y)$. We call  a Lipschitz curve $\gamma : [a,b] \rightarrow \mathbb{H}$ parametrized by 
$\gamma(t)= (\gamma_1(t),\gamma_2(t),\gamma_3(t))$  {\it horizontal}  if 
$$
\gamma^{\prime}(t)\in \mathrm{span}\left\{ \left(1,0, \frac{\gamma_2(t)}{2}\right), \, \left(0,1, -\frac{\gamma_1(t)}{2}\right)\right\}\, .
$$ 
The C-C distance between $x$ and $y$ is then defined as
\begin{align} \label{def-cc}
 d_C(x,y) := \inf\limits_{\gamma} \int_{a}^b \sqrt{\gamma^\prime_1(t)^2+\gamma^\prime_2(t)^2} \ \df t,
\end{align}
where the infimum is taken over all horizontal curves $\gamma$ connecting $x$ and $y$. 
\smallskip

\noindent  We define the C-C and Kaplan distance functions for an open bounded domain $\Omega$ by 
\begin{equation} \label{dists}
\delta_C(x) :=  \inf\limits_{y\in \partial\Omega} d_C(x,y) , \qquad \delta_K(x):= \inf\limits_{y\in \partial\Omega} \norm{(-y)\boxplus x}_{\mathbb{H}}\, .
\end{equation}
For $x\in \Omega^c$ we extend these functions by $\delta_C(x):=0$ and $\delta_K(x):=0$. With these prerequisites we can state the main results of our paper.

\section{\bf Main results}
\label{sec-main}

\begin{thm}\label{mainresultinthree}
Let $\Omega \subset \mathbb{H}$ be open bounded and let the connected components of $H(x) \cap \Omega$ be convex for all $x\in \Omega$. Then holds
\begin{align}\label{hardy-omega}
\int_{\Omega} \frac{|u(x)|^{2}}{{\omega(x)^{2}}}   \df x \ \leq 4\int_{\Omega} |\nabla_{\mathbb{H}}\, u(x)|^{2} \df x
\end{align}
for all $u\in C_{0}^{\infty}(\Omega)$, where $\omega(\cdot)$ is defined in \eqref{reduceddistance} and it holds

\begin{equation}
\begin{split}\label{introidenteuclidC-C}
\omega(x)= \inf\limits_{y\in \partial\Omega\cap H(x)} \norm{(-y)\boxplus x}_{\mathbb{H}}= \inf\limits_{y\in \partial\Omega\cap H(x)}   \sqrt{(x_1-y_1)^2+(x_2-y_2)^2}.
\end{split}
\end{equation}
\end{thm}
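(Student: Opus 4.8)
The plan is to reduce the inequality on $\Omega$ to a family of one-dimensional Hardy inequalities along horizontal line segments lying in the tangent planes $H(x)$, exactly as in the Euclidean convex case. The key geometric observation is that the vector fields $X_1,X_2$ span, at each point $x$, the tangent plane $H(x)$: indeed a horizontal curve through $x$ has velocity in $\mathrm{span}\{(1,0,x_2/2),(0,1,-x_1/2)\}$, which is precisely the plane $H(x)$ from \eqref{tangentplane}. Consequently, for a fixed direction $\theta=(\theta_1,\theta_2)\in\mathbb{S}^1$, the curve $t\mapsto x\boxplus t(\theta_1,\theta_2,0)$ stays in $H(x)$ (one checks directly that $H(x\boxplus t(\theta_1,\theta_2,0))=H(x)$, since left translation is an isometry of $\mathbb{H}$ preserving the horizontal distribution), and along it the directional derivative of $u$ equals $\theta_1 X_1 u+\theta_2 X_2 u$. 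So $|\nabla_{\mathbb{H}} u(x)|^2 = \fint_{\mathbb{S}^1} |(\theta\cdot\nabla_{\mathbb{H}})u(x)|^2\, 2\,\df\theta$ in the usual averaging sense, and the Heisenberg gradient is genuinely a "two-dimensional Euclidean gradient restricted to the affine plane $H(x)$."

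Next I would invoke the hypothesis that the connected components of $H(x)\cap\Omega$ are convex. On each such component, which is a two-dimensional convex set sitting inside the affine plane $H(x)$, the classical Euclidean convex Hardy inequality \eqref{euklidischeungleichung} in dimension two applies with the in-plane distance to the relative boundary. The point of \eqref{introidenteuclidC-C} is that this in-plane Euclidean distance, the in-plane Kaplan distance, and the C-C distance to $\partial\Omega\cap H(x)$ all coincide: a segment in $H(x)$ is a horizontal curve, so its C-C length equals its Euclidean length in the $(x_1,x_2)$-projection; and $\|(-y)\boxplus x\|_{\mathbb{H}}$ reduces to $\sqrt{(x_1-y_1)^2+(x_2-y_2)^2}$ precisely when $y\in H(x)$, because the defining relation $x_1y_2-x_2y_1=2(x_3-y_3)$ forces the $x_3$-component of $(-y)\boxplus x$ to vanish. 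I would prove \eqref{introidenteuclidC-C} first as a standalone lemma, then use it to identify $\omega(x)$ with the in-plane distance to the relative boundary of the component of $H(x)\cap\Omega$ containing $x$.

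Then the proof assembles as follows: write $\int_\Omega |\nabla_{\mathbb{H}} u|^2\,\df x$ by Fubini as an integral over a transversal parameter (say $x_3$, or better a foliation parameter adapted to the planes $H(x)$) of two-dimensional integrals over the slices $H(x)\cap\Omega$; on each slice apply the $2$D convex Hardy inequality with the in-plane distance; recombine using \eqref{introidenteuclidC-C} to get $\omega(x)^{-2}$ on the left. The only delicate point is the change of variables / coarea-type formula needed to foliate $\Omega$ by the tangent planes $H(x)$: the planes $H(x)$ do not form a trivial foliation (they are the contact planes of the standard contact form $-\frac{x_2}{2}\df x_1+\frac{x_1}{2}\df x_2+\df x_3$, which is non-integrable), so there is no global foliation. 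The standard fix, and the main obstacle to handle carefully, is to integrate instead over the family of \emph{horizontal lines} in a fixed direction $\theta$: for each $\theta$ the lines $\{x_0\boxplus t\theta : t\in\R\}$ do foliate $\R^3$ (a genuine $2$-parameter family of parallel-in-projection lines), each such line lies in $H(x_0)$, and $\theta\cdot\nabla_{\mathbb{H}} u$ is the derivative along it. Applying the one-dimensional Hardy inequality on each line, where the relevant distance is $\omega_\theta(x):=$ distance along the line to $\partial\Omega$, integrating over the $2$-parameter family, then averaging over $\theta\in\mathbb{S}^1$, yields $\int_\Omega \omega(x)^{-2}|u|^2 \le 4\int_\Omega|\nabla_{\mathbb{H}} u|^2$ once one checks that $\inf_\theta \omega_\theta(x)$ is controlled by $\omega(x)$ — and here convexity of $H(x)\cap\Omega$ is exactly what guarantees, via the two-dimensional convex-domain argument (Davies/Avkhadiev-type averaging over directions within the plane), that averaging the one-dimensional inequalities over in-plane directions reproduces the sharp constant $4$ with the genuine distance $\omega(x)$ rather than a direction-dependent surrogate.
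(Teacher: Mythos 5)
Your proposal is correct in substance and follows essentially the same strategy as the paper: reduce matters to one-dimensional Hardy inequalities along horizontal straight lines (which lie in the tangent plane $H(x)$ of their base point), and then exploit convexity of the components of $H(x)\cap\Omega$ through a supporting hyperplane at the nearest boundary point. The one structural difference is that you average over all in-plane directions $\theta\in\mathbb{S}^1$ (Davies' mean-distance method, via $\rho_\theta(x)\le\omega(x)/|\langle\theta,\nu\rangle|$ and $\tfrac{1}{\pi}\int_{\mathbb{S}^1}\langle\theta,\nu\rangle^2\,\df\theta=1$), whereas the paper uses only the two directions of $X_1$ and $X_2$ (Lemma \ref{pseudodist}) together with the identity $\bigl((x_1-a_1)^2+(x_2-a_2)^2\bigr)/\omega(x)^4=1/\omega(x)^2$; both yield the constant $4$, and the two-direction version spares you the Fubini/Jacobian bookkeeping for a continuum of line foliations. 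Two points need repair. First, your parenthetical justification ``$H(x\boxplus t(\theta_1,\theta_2,0))=H(x)$'' is false: for instance $H(0)=\{y_3=0\}$ while $H((t,0,0))=\{y:\,ty_2=-2y_3\}$; the contact planes rotate about the horizontal line (this is precisely the non-integrability you yourself invoke later), and what you need --- and what is true --- is only the containment $\{x\boxplus t\theta:\,t\in\R\}\subset H(x)$, verified directly from \eqref{tangentplane}. Second, for \eqref{introidenteuclidC-C} the segment argument only gives $d_C(x,y)\le\sqrt{(x_1-y_1)^2+(x_2-y_2)^2}$; since the Hardy inequality requires bounding the averaged weight from below by $1/\omega(x)^2$ with $\omega=\inf d_C$, you also need the reverse inequality $d_C(x,y)\ge\sqrt{(x_1-y_1)^2+(x_2-y_2)^2}$. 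This follows most cheaply from the observation that the C-C length of any horizontal curve equals the Euclidean length of its projection onto the $(x_1,x_2)$-plane (the paper instead deduces it from the gauge/C-C comparison of Lemma \ref{lemmasharpestimate}); either way it must be stated, not just deferred to a ``standalone lemma'' whose nontrivial half is the one your sketch omits.
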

\noindent  We call the weight $\omega(\cdot)$ the reduced C-C distance.  The proof of \eqref{hardy-omega} is done in the following way. We proof the  Hardy inequality for each separate $X_j$, where the distance function is given by the C-C metric generated by $X_j$ for $j\in\{1,2\}$. Then we apply the hyperplane separation theorem in the same way as Davies did for the proof of \eqref{euklidischeungleichung} for convex domains, see \cite{davies}.

\begin{thm} \label{maineins}
Let $ \Omega \subset \mathbb{H}$ be an open bounded convex polytope and let $m\in\mathbb{N}$ be the number of hyperplanes of $\partial \Omega$, which are not orthogonal to the hyperplane $x_3=0$. Then holds
\begin{align}\label{hardyconvexpoly}
\frac{1}{5}\left( \frac{48\sqrt{6}}{c_m}+  1  \right)^{-4/3}\int_{\Omega} \frac{|u(x)|^2}{\delta_C(x)^2} \df x \ \leq  \int_{\Omega} |\nabla_{\mathbb{H}}\, u(x)|^{2} \df x,
\end{align}
for all $u\in C_{0}^{\infty}(\Omega)$, where $c_m$ is a positive solution of
\begin{align*}
  c_m^{4/3}  \pi m\sqrt{c_m^2+16} \left( 1+   \frac{c_m}{48\sqrt{6}}  \right)^{2/3}= 16/3.
\end{align*}
\end{thm}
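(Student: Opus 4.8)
The plan is to reduce the estimate on the C-C distance $\delta_C$ to the reduced distance $\omega$ of Theorem \ref{mainresultinthree} by controlling, for each point $x\in\Omega$, how much the intersection of the tangent plane $H(x)$ with $\partial\Omega$ can move away from $\partial\Omega$ itself. First I would record two elementary facts about a convex polytope $\Omega$ with bounding hyperplanes $\Pi_1,\dots,\Pi_N$. Fact one: the connected components of $H(x)\cap\Omega$ are convex (indeed $H(x)\cap\Omega$ itself is convex as an intersection of convex sets), so Theorem \ref{mainresultinthree} applies and gives
\begin{align}\label{plan-omega-ineq}
\int_{\Omega}\frac{|u(x)|^2}{\omega(x)^2}\,\df x \ \leq\ 4\int_{\Omega}|\nabla_{\mathbb{H}}\,u(x)|^2\,\df x .
\end{align}
Fact two: $\omega(x)=\inf_{y\in\partial\Omega\cap H(x)}\sqrt{(x_1-y_1)^2+(x_2-y_2)^2}$ by \eqref{introidenteuclidC-C}, while $\delta_C(x)\le d_C(x,y)$ for \emph{every} boundary point $y$, in particular for those $y$ realizing (or nearly realizing) $\omega(x)$. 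So the whole game is a pointwise comparison $\delta_C(x)\le \Phi\cdot\omega(x)$ with a constant $\Phi$ depending only on $m$; substituting this into \eqref{plan-omega-ineq} and tracking constants yields \eqref{hardyconvexpoly}.

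The core step is therefore the pointwise bound. Fix $x\in\Omega$ and let $y^\ast\in\partial\Omega\cap H(x)$ (nearly) attain $\omega(x)$; then $y^\ast$ lies on some bounding hyperplane $\Pi_k$. The point is that a horizontal curve from $x$ staying inside $H(x)$ is, in the Heisenberg group, a \emph{straight Euclidean segment} in the $H(x)$ plane (since the horizontal distribution restricted to $H(x)$ is one-dimensional and the $x_3$-increment is determined), and its C-C length equals the Euclidean length of its $(x_1,x_2)$-projection — this is exactly the content of \eqref{introidenteuclidC-C}. The obstruction is that such a segment may leave $\Omega$ before reaching $\partial\Omega$ only through the face $\Pi_k$ that is \emph{not} orthogonal to $\{x_3=0\}$: if $\Pi_k$ is orthogonal to $\{x_3=0\}$ then it contains the vertical direction and the tangent-plane segment through $x$ necessarily hits $\partial\Omega$ on such a face at Euclidean-projection distance equal to $\omega(x)$ with no correction. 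For the at most $m$ non-orthogonal faces one must instead estimate how far the actual C-C geodesic to $\Pi_k$ can be, using the structure of the Kaplan gauge; here the dilation/scaling behaviour of the gauge and the quantity $\sqrt{c_m^2+16}$ entering the defining equation for $c_m$ strongly suggest one bounds $d_C(x,\Pi_k)$ against the Euclidean distance from $x$ to $\Pi_k$ times a factor that degrades like $c_m$ as the face tilts, and then optimizes over the tilt. I expect this optimization — balancing the number $m$ of bad faces against the per-face distortion — to be what produces the transcendental equation $c_m^{4/3}\pi m\sqrt{c_m^2+16}(1+c_m/(48\sqrt6))^{2/3}=16/3$ and the final prefactor $\tfrac15(48\sqrt6/c_m+1)^{-4/3}$.

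Concretely, the steps in order are: (i) verify convexity of $H(x)\cap\Omega$ and invoke Theorem \ref{mainresultinthree} to get \eqref{plan-omega-ineq}; (ii) for each bad face $\Pi_k$, derive a local comparison $d_C(x,\Pi_k)\le \psi(c)\,\mathrm{dist}_{\mathrm{eucl}}(x,\Pi_k)$ in terms of a free parameter $c$ controlling how the Kaplan gauge distorts distances near a tilted hyperplane, using the explicit form of $\norm{\cdot}_{\mathbb{H}}$ and the group law \eqref{group-law}; (iii) combine the $m$ bad faces by a union/worst-case argument to get $\delta_C(x)\le F(c,m)\,\omega(x)$, where $F$ collects a factor $\pi m\sqrt{c^2+16}$ from summing the angular defects and a factor from the gauge scaling; (iv) choose $c=c_m$ to minimize $F(c,m)$, which is where the stated equation for $c_m$ comes from; (v) square the resulting pointwise inequality, integrate, and feed it into \eqref{plan-omega-ineq}, bookkeeping the numerical constants $4$, $\tfrac15$, $(48\sqrt6/c_m+1)^{-4/3}$ to arrive at \eqref{hardyconvexpoly}. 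The main obstacle, I expect, is step (ii): quantifying precisely how a horizontal geodesic to a tilted hyperplane compares to the in-tangent-plane segment, since one must control the $x_3$-drift of the true geodesic relative to the planar segment and this is where the anisotropy of $\mathbb{H}$ — and the appearance of the constant $16$ in $\norm{x}_{\mathbb{H}}^4=(x_1^2+x_2^2)^2+16x_3^2$ — enters the estimates.
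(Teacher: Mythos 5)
Your plan hinges on step (iii): a pointwise comparison between the reduced distance $\omega$ and $\delta_C$ with a constant $F(c,m)$ depending only on $m$ (note the inequality you need is $\omega(x)\le F\,\delta_C(x)$, so that $\delta_C(x)^{-2}\le F^2\omega(x)^{-2}$ can be fed into the $\omega$-inequality; the direction $\delta_C\le\Phi\,\omega$ as written is trivially true with $\Phi=1$ since $\partial\Omega\cap H(x)\subset\partial\Omega$, but useless). The comparison you actually need is \emph{false}, and no refinement of your step (ii) can repair it. The failure occurs at the characteristic points of the non-orthogonal faces: if a face of $\Omega$ lies in $\{x_3=0\}$ and $x=(0,0,\varepsilon)$ with $\varepsilon$ small, then $H(x)=\{y_3=\varepsilon\}$ is \emph{parallel} to that face, so $\partial\Omega\cap H(x)$ meets only the other faces and $\omega(x)$ stays bounded below by a fixed constant, while $\delta_C(x)\sim\sqrt{\varepsilon}\to0$. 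Thus $\omega(x)/\delta_C(x)\to\infty$ and the weight $\omega^{-2}$ supplied by Theorem \ref{mainresultinthree} simply does not see the boundary in the characteristic region. This is not a matter of controlling the drift of a geodesic to a tilted hyperplane, as you suggest in (ii); near a characteristic point there is no in-tangent-plane path of length comparable to $\delta_C(x)$ reaching $\partial\Omega$ at all.

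The missing ingredient is the one-point Hardy inequality \eqref{onepointyang}, which the paper adds (averaged over $m$ translated copies, one centered at the characteristic point $a_j=(2n_{2,j},-2n_{1,j},c_j)/n_{3,j}$ of each non-orthogonal face $\Pi_j$) to the $d_1,d_2$-inequality of Lemma \ref{pseudodist}; this is exactly why the final prefactor contains $\tfrac15=\tfrac1{4+1}$ rather than $\tfrac14$. The proof then splits, for the boundary point $b$ realizing $\delta_K(x)$, into the case where $x$ lies outside the characteristic region of the face containing $b$ (there the hyperplane-separation argument with $d_1,d_2$ works, much as you envisage) and the case where $x$ lies inside it (there one uses $d_C(x,a_j)^{-2}$ together with the lower bounds of Proposition \ref{propohyperplane} on the Kaplan distance to a tilted hyperplane); the transcendental equation for $c_m$ arises from equalizing the two resulting lower bounds, and Lemma \ref{lemmasharpestimate} converts the Kaplan gauge back to the C-C distance at the end. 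Without invoking \eqref{onepointyang} or an equivalent substitute near the characteristic points, your outline cannot close.
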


\noindent In addition we proof that for $c_m$ holds the following 
 \begin{align}\label{thecmconstant}
 \frac1c_m  \leq   3\pi m^{8/9} 2^{-11/6} \left( 1+ \frac{1}{12\sqrt{6}}  \right)^{2/3},
\end{align}
which yields a result with an explicit constant in \eqref{hardyconvexpoly}.

The strategy of the proof of Theorem \ref{maineins} consists of two steps. We use Theorem \ref{mainresultinthree} for a bounded convex polytope. Then we take into account the following Hardy inequality
 \begin{align}\label{onepointyang}
\int_{\Omega} \frac{|u(x)|^{2}}{d_C(x,0)^{2}} \df x \ \leq \ \int_{\Omega}   \left|{\nabla_{\mathbb{H}}}\, u(x)\right|^{2} \df x,
\end{align}
for all $u\in C_{0}^{\infty}(\Omega)$, which was proved in \cite{goldstein,ruzhansky,yang}. The sum of the weight functions is then a comparable to distance function to the hyperplanes of the given polytope respectively the Kaplan gauge, which is equivalent to the distance function respectively  the C-C metric. 
 
We can improve the constant in Theorem \ref{maineins} under an additional geometrical assumption, which is discussed in section \ref{Copoimproved}. The main consequence of that result is the following;
 \begin{thm}\label{1approxdom}
For any $\varepsilon>0$ there exists an open bounded convex domain $\Omega$ such that
\begin{align*}
\int_{\Omega} \frac{|u(x)|^2}{\delta_C(x)^2} \df x \ \leq (2+\varepsilon)^2\int_{\Omega} |\nabla_{\mathbb{H}}\, u(x)|^{2} \df x
\end{align*}
holds for all $u\in C_0^\infty(\Omega)$.
\end{thm}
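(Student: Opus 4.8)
\medskip
\noindent\textbf{Plan of proof of Theorem~\ref{1approxdom}.}
The result follows from Theorem~\ref{assumpconvexpol1} (equivalently, directly from Theorem~\ref{mainresultinthree} for the convex bodies built below): the task is to produce, for each $\varepsilon>0$, a single open bounded convex set on which the constant it furnishes is at most $2+\varepsilon$. The guiding principle is that, since $\delta_C(x)\le\omega(x)$ for every $x$, the sharp estimate \eqref{hardy-omega} already yields \eqref{ersteungleichung} with $c$ close to $2$ whenever $\delta_C$ and $\omega$ are comparable with ratio close to $1$; and $\omega(x)/\delta_C(x)$ can be large only near points $x$ whose tangent plane $H(x)$ is (nearly) parallel to a face of $\partial\Omega$. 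By \eqref{tangentplane} the normal of $H(x)$ is directed along $(-x_2/2,x_1/2,1)$, so $H(x)$ is never parallel to a hyperplane orthogonal to $\{x_3=0\}$, and it is parallel to a prescribed non-vertical hyperplane only when $(x_1,x_2)$ lies on one fixed circle about the $x_3$-axis. Hence a convex body that stays uniformly inside that circle, and whose non-vertical supporting hyperplanes are uniformly close to vertical, is the natural candidate.

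I would use the convex ``spindles''
\[
\Omega_\theta:=\Big\{x\in\mathbb{H}\ :\ \sqrt{x_1^2+x_2^2}+\cot\theta\,|x_3|<1\Big\},\qquad\theta\in(\tfrac\pi4,\tfrac\pi2),
\]
which are open, bounded, convex, rotationally symmetric about the $x_3$-axis, and such that $H(x)\cap\Omega_\theta$ is convex for all $x$ (so Theorem~\ref{mainresultinthree} applies); to invoke Theorem~\ref{assumpconvexpol1} literally one replaces $\Omega_\theta$ by an inscribed convex polytope $P_{n,\theta}$. Since $\sup_{\overline{\Omega_\theta}}\sqrt{x_1^2+x_2^2}=1<2\tan\theta$ for $\theta>\pi/4$, no $x\in\overline{\Omega_\theta}$ has $H(x)$ parallel to a supporting plane of $\Omega_\theta$, and every supporting plane of $\partial\Omega_\theta$ makes an angle at most $\arctan(\cot\theta)$ with the vertical; thus $\Omega_\theta$ — and the $P_{n,\theta}$, uniformly in $n$ — meets the geometric hypothesis of Theorem~\ref{assumpconvexpol1}.

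The heart of the matter is the uniform comparison $\omega(x)\le(1+\eta(\theta))\,\delta_C(x)$ for all $x\in\Omega_\theta$, with $\eta(\theta)\to0$ as $\theta\to\pi/2$. Granting it, Theorem~\ref{mainresultinthree} gives
\[
\int_{\Omega_\theta}\frac{|u(x)|^2}{\delta_C(x)^2}\,\df x\ \le\ (1+\eta(\theta))^2\int_{\Omega_\theta}\frac{|u(x)|^2}{\omega(x)^2}\,\df x\ \le\ 4\,(1+\eta(\theta))^2\int_{\Omega_\theta}|\nabla_{\mathbb{H}}\,u(x)|^2\,\df x
\]
for all $u\in C_0^\infty(\Omega_\theta)$, i.e.\ \eqref{ersteungleichung} holds with $c=2(1+\eta(\theta))\le2+\varepsilon$ once $\theta$ is close enough to $\pi/2$, which is the assertion. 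To prove the comparison one reduces by rotational symmetry to $x=(r,0,z)$ with $r,z\ge0$, writes $H(x)=\{y:ry_2=2(z-y_3)\}$ and the closed curve $H(x)\cap\partial\Omega_\theta$ from \eqref{tangentplane}, and compares the C--C distance of $x$ to $\partial\Omega_\theta$ with the Euclidean distance of $x$ to that curve (using \eqref{introidenteuclidC-C} for the latter): away from the tips the nearest boundary point is reached along an essentially horizontal path and $\omega(x)/\delta_C(x)=1+O(\cot\theta)$, while near the tips $(0,0,\pm\tan\theta)$, where $H(x)$ is nearly horizontal, one must check that every horizontal curve exiting $\Omega_\theta$ has length at least $(1-o(1))\,\omega(x)$, because once $\cot\theta$ is small, increasing $\sqrt{x_1^2+x_2^2}+\cot\theta|x_3|$ along a horizontal curve staying near the axis costs essentially the curve's length — this is the step in which $\cot\theta\to0$ excludes the cheap ``spiralling'' route responsible for $\delta_C<\delta_{\mathrm{Eucl}}$ in the half-space model behind \eqref{geometriclarson}.

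The main obstacle is precisely this comparison made uniform over $x\in\Omega_\theta$, and — if one works through polytopes — its stability under $P_{n,\theta}\uparrow\Omega_\theta$, where the number of non-vertical faces tends to infinity so that the geometric hypothesis of Theorem~\ref{assumpconvexpol1} is genuinely needed to keep the constant bounded. Should the bulk estimate prove delicate near the equatorial ridge, one can instead retain a small multiple of the one-point inequality \eqref{onepointyang}, centred at the two tips, and absorb the tip contribution that way, replacing $2(1+\eta(\theta))$ by $2(1+\eta(\theta))+o(1)$. Finally, $\delta_C\le\omega$ forces $c>2$ for every fixed $\theta$, consistently with the universal lower bound \eqref{approximatingsequence}.
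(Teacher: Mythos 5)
Your overall strategy (exhibit a family of convex domains on which the constant supplied by Theorem~\ref{mainresultinthree} or Theorem~\ref{assumpconvexpol1} degenerates to $4$) is the right one, but as written the argument has a genuine gap: the single estimate everything hinges on, namely the uniform comparison $\omega(x)\le(1+\eta(\theta))\,\delta_C(x)$ on $\Omega_\theta$ with $\eta(\theta)\to0$, is only asserted (``granting it\dots''), and you yourself flag it as the main obstacle. This is not a routine verification: it amounts to showing, uniformly over the spindle and in particular near the tips $(0,0,\pm\tan\theta)$ and the equatorial ridge $\{|x'|=1,\ x_3=0\}$, that no short horizontal (``spiralling'') curve exits $\Omega_\theta$ much faster than the straight segment inside $H(x)$; the isoperimetric bound $|y_3|\le L^2/(4\pi)$ on the C--C ball makes this plausible, but it has to be carried out. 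The same criticism applies to your alternative route through Theorem~\ref{assumpconvexpol1}: you only check that Assumption~\ref{assumpconvexpol} holds for \emph{some} $a>0$, whereas the resulting constant $\bigl(48\sqrt{6}/a+1\bigr)^{4/3}$ is close to $1$ only when $a$ is \emph{large}, so you would need $a=a(\theta)\to\infty$ as $\theta\to\pi/2$. (This is in fact true for your spindles --- the left-hand side of \eqref{extrassumption} is of order $\tan^2\theta$ for their supporting planes while the right-hand side is $O(a\tan\theta)$ --- but the computation is missing, as is the justification for passing from the inscribed polytopes $P_{n,\theta}$ back to $\Omega_\theta$.)

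The paper sidesteps all of this with a much simpler device that your proposal does not use: instead of making the domain tall and symmetric about the $x_3$-axis, one \emph{translates} a cylinder far away from the axis. Concretely, $\Omega_a=B_1(p_a)\times(0,1)$ with $p_a=(\sqrt{a}+1,0)$ satisfies $x_1^2+x_2^2\ge a\ge a|x_3-\alpha|$ and $x_1^2+x_2^2\ge a|x_3-\beta|$ for all $x\in\Omega_a$ (with $\alpha=0$, $\beta=1$), so the Corollary on convex cylinders applies verbatim and gives the constant $4\bigl(48\sqrt{6}/a+1\bigr)^{4/3}\to4$ as $a\to\infty$; the only non-vertical parts of the boundary are the top and bottom faces, and the relevant quantity $x_1^2+x_2^2$ is made uniformly large by translation alone, with no near-tip or ridge analysis needed. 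If you wish to keep your spindles, the quickest repair is to drop the direct $\omega$-versus-$\delta_C$ comparison and instead quantify $a(\theta)$ in Assumption~\ref{assumpconvexpol} as indicated above.
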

\noindent The last result has an almost optimal constant, since we proof that for any open bounded domain $\Omega$ holds 
\begin{align*}
\inf\limits_{u\in  C_{0}^{\infty}(\Omega)} \frac{\int_{\Omega} |\nabla_{\mathbb{H}}\, u(x)|^{2} \df x}{\int_{\Omega}  {|u(x)|^{2}}{{\delta_C(x)^{-2} \df x}}} \leq \frac 14,
\end{align*}
see Theorem \ref{lem-sharp} .
\section{\bf Restricted C-C distance and its connection to the Euclidean distance}


\subsection{The natural restriction of \texorpdfstring{$\partial\Omega$}{}} In this section we show that  the reduced distance $\omega(\cdot)$, defined by \eqref{reduceddistance} can be expressed in terms of a simple explicit formula. In particular, we show that $\omega$  coincides with the distance to the boundary in the Kaplan gauge as well as the projection onto the $x_1$-$x_2$-hyperplane of the Euclidean metric.

\begin{thm}\label{lemmaeuclidC-C}
Let $\Omega \subset \mathbb{H}$ be open bounded, then holds
\begin{align*}
\omega(x)\ = \ \inf_{y\in \partial\Omega\cap H(x)} \sqrt{(x_1-y_1)^2+(x_2-y_2)^2} \ = \  \inf_{y\in \partial\Omega\cap H(x)}\norm{(-y)\boxplus x}_{\mathbb{H}},
\end{align*}
for all $x\in \Omega$.
\end{thm}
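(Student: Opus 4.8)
The plan is to reduce the statement to the following \emph{pointwise} identity: for every $x\in\Omega$ and every $y\in H(x)$,
\[
d_C(x,y)\;=\;\sqrt{(x_1-y_1)^2+(x_2-y_2)^2}\;=\;\norm{(-y)\boxplus x}_{\mathbb{H}}.
\]
Granting this, the theorem follows by taking the infimum over $y\in\partial\Omega\cap H(x)$, since by the definition \eqref{reduceddistance} of $\omega$ the three quantities in the statement are exactly the infima over this same set of the three pointwise expressions above. (If $\partial\Omega\cap H(x)=\emptyset$ all three equal $+\infty$; this does not occur for bounded $\Omega$, as $H(x)\cap\Omega$ is then a nonempty bounded open subset of the plane $H(x)$, hence has nonempty relative boundary contained in $\partial\Omega\cap H(x)$, but in any case the convention $\inf\emptyset=+\infty$ covers it.)

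For the Kaplan term, a direct computation from the group law \eqref{group-law} gives $(-y)\boxplus x=\big(x_1-y_1,\,x_2-y_2,\,(x_3-y_3)-\tfrac12(x_1y_2-x_2y_1)\big)$, and by the second description of $H(x)$ in \eqref{tangentplane} the membership $y\in H(x)$ is precisely $x_1y_2-x_2y_1=2(x_3-y_3)$, so the third coordinate vanishes; since $\norm{(a,b,0)}_{\mathbb{H}}^4=(a^2+b^2)^2$, this yields the second equality. For the C-C distance, the lower bound $d_C(x,y)\ge\sqrt{(x_1-y_1)^2+(x_2-y_2)^2}$ is soft: any horizontal curve joining $x$ and $y$ projects to a planar curve joining $(x_1,x_2)$ to $(y_1,y_2)$, and by \eqref{def-cc} its C-C length equals the Euclidean length of that projection, which is at least the length of the straight segment.

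The remaining point, which I expect to carry the weight of the argument, is the matching upper bound, and this is exactly where the definition of the tangent plane enters. I would lift the straight base segment $t\mapsto(1-t)(x_1,x_2)+t(y_1,y_2)$, $t\in[0,1]$, to the unique horizontal curve $\gamma$ with $\gamma(0)=x$; since for a straight base segment the skew term $\gamma_2\gamma_1'-\gamma_1\gamma_2'$ is constant in $t$, one computes $\gamma_3(1)-\gamma_3(0)=\tfrac12(x_2y_1-x_1y_2)$, and the defining equation of $H(x)$ says precisely that this holonomy increment equals $y_3-x_3$, so that $\gamma(1)=y$. As this horizontal curve has length $\sqrt{(x_1-y_1)^2+(x_2-y_2)^2}$, the pointwise identity follows, and passing to the infimum over $y\in\partial\Omega\cap H(x)$ finishes the proof. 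The only genuinely delicate step is thus the verification that the horizontal lift of the planar segment terminates at $y$; everything else is a short direct computation.
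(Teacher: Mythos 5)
Your proof is correct. The core construction is the same as the paper's: you lift the straight planar segment from $(x_1,x_2)$ to $(y_1,y_2)$ to a horizontal curve starting at $x$ and check, using the defining equation of $H(x)$, that the holonomy increment $\tfrac12(x_2y_1-x_1y_2)$ equals $y_3-x_3$, so the lift ends at $y$ and gives $d_C(x,y)\le\sqrt{(x_1-y_1)^2+(x_2-y_2)^2}$; likewise your verification that the third coordinate of $(-y)\boxplus x$ vanishes precisely when $y\in H(x)$ is the same computation that identifies this quantity with $\norm{(-y)\boxplus x}_{\mathbb{H}}$. Where you genuinely diverge is the matching lower bound: you prove $d_C(x,y)\ge\sqrt{(x_1-y_1)^2+(x_2-y_2)^2}$ directly by noting that the C-C length of any horizontal curve is exactly the Euclidean length of its projection onto the $x_1$-$x_2$-plane, hence at least the straight-line distance between the projected endpoints. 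The paper instead closes the chain $d_C(x,y)\le\sqrt{(x_1-y_1)^2+(x_2-y_2)^2}=\norm{(-y)\boxplus x}_{\mathbb{H}}\le d_C(x,y)$ by invoking Lemma \ref{lemmasharpestimate}, whose proof relies on the explicit parametrization of geodesics from the origin. Your route is more elementary and self-contained, since it needs no information about geodesics at all; the paper's route costs nothing extra in context because Lemma \ref{lemmasharpestimate} is needed elsewhere anyway, and it records in passing the sharp two-sided comparison between the Kaplan gauge and the C-C distance. Your remark on the nonemptiness of $\partial\Omega\cap H(x)$ for bounded $\Omega$ is a small point the paper leaves implicit, and your justification of it is correct.
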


\noindent For the proof we need the following;

\begin{lemma}\label{lemmasharpestimate}
For all $x,y \in \mathbb{H}$ it holds
\begin{align}\label{Ungleichung}
\frac{1}{\pi^{2}}\, d_C(x,y)^{4} \leq  \norm{(-y)\boxplus x}_{\mathbb{H}}^{4} \leq d_C(x,y)^{4}. 
\end{align}
Moreover, both inequalities are sharp.
\end{lemma}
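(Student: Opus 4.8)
The plan is to exploit the symmetries of $\mathbb{H}$ to reduce to a one–parameter family of model points, to identify $d_C(0,z)$ with the solution of a classical isoperimetric problem with fixed chord, and to finish with a one–variable calculus estimate. Since both quantities in \eqref{Ungleichung} are left–invariant, replacing $(x,y)$ by $((-y)\boxplus x,0)$ we may assume $y=0$; put $z:=(-y)\boxplus x$. The rotations of the $x_1x_2$–plane (fixing $x_3$) are automorphisms of $\mathbb{H}$ preserving both $d_C(0,\cdot)$ and $\norm{z}_{\mathbb{H}}$, so it suffices to prove
\[
\tfrac1{\pi^{2}}\,d_C(0,z)^{4}\ \le\ \norm{z}_{\mathbb{H}}^{4}\ \le\ d_C(0,z)^{4}
\]
for $z=(\rho,0,\tau)$ with $\rho\ge0$, $\tau\in\R$ (the anisotropic dilations scale both sides the same way, so one could normalise further, but this is not needed). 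The two boundary cases already exhibit the extremal configurations. If $\tau=0$, the straight segment gives $d_C(0,z)\le\rho$; conversely the Euclidean length of the $x_1x_2$–projection of a horizontal curve is a lower bound for its length, so $d_C(0,z)=\rho=\norm{z}_{\mathbb{H}}$, i.e.\ equality holds in the right inequality. If $\rho=0$, the planar isoperimetric inequality gives $d_C\bigl(0,(0,0,\tau)\bigr)=2\sqrt{\pi|\tau|}$, hence $\pi^{-2}d_C(0,z)^{4}=16\tau^{2}=\norm{z}_{\mathbb{H}}^{4}$, i.e.\ equality holds in the left inequality. In particular both inequalities of \eqref{Ungleichung} are attained, which proves sharpness.

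Now let $\rho>0$, $\tau\neq0$. A horizontal curve $\gamma$ from $0$ to $z$ satisfies $\dot\gamma_3=\tfrac12(\gamma_2\dot\gamma_1-\gamma_1\dot\gamma_2)$, so $\tau=\gamma_3(1)=\tfrac12\int_0^1(\gamma_2\dot\gamma_1-\gamma_1\dot\gamma_2)\df t$; closing the planar projection $\sigma:=(\gamma_1,\gamma_2)$ by the segment from $(\rho,0)$ to $(0,0)$ and applying Green's theorem, $|\tau|$ is exactly the area enclosed by $\sigma$ together with that segment, while $\mathrm{length}(\sigma)=\mathrm{length}(\gamma)$. Hence $d_C(0,z)$ equals the least length of a planar curve from $(0,0)$ to $(\rho,0)$ which, together with the chord $[(0,0),(\rho,0)]$, encloses area $|\tau|$, and by the isoperimetric characterisation of circular arcs this minimiser is an arc. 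Writing $\alpha\in(0,\pi)$ for its half–angle at the centre, its radius is $R=\rho/(2\sin\alpha)$, the area condition reads $(\alpha-\sin\alpha\cos\alpha)/\sin^{2}\alpha=4|\tau|/\rho^{2}$ (the left side being an increasing bijection of $(0,\pi)$ onto $(0,\infty)$), and its length is $\rho\alpha/\sin\alpha$; therefore $d_C(0,z)^{2}=\rho^{2}\alpha^{2}/\sin^{2}\alpha$.

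Substituting $16\tau^{2}=\rho^{4}(\alpha-\sin\alpha\cos\alpha)^{2}/\sin^{4}\alpha$ and using the identity $\sin^{4}\alpha+(\alpha-\sin\alpha\cos\alpha)^{2}=\sin^{2}\alpha+\alpha^{2}-\alpha\sin2\alpha=:g(\alpha)$, one gets $\norm{z}_{\mathbb{H}}^{4}=\rho^{4}g(\alpha)/\sin^{4}\alpha$, so \eqref{Ungleichung} becomes the two–sided elementary estimate $\alpha^{4}/\pi^{2}\le g(\alpha)\le\alpha^{4}$ on $(0,\pi)$. Here $g'(\alpha)=4\alpha\sin^{2}\alpha$, so $h:=\alpha^{4}-g$ has $h(0)=0$ and $h'(\alpha)=4\alpha(\alpha^{2}-\sin^{2}\alpha)>0$ on $(0,\pi)$, which gives $g\le\alpha^{4}$; and $k:=\pi^{2}g-\alpha^{4}$ has $k(0)=k(\pi)=0$ and $k'(\alpha)=4\alpha(\pi\sin\alpha+\alpha)(\pi\sin\alpha-\alpha)$, where $\psi(\alpha):=\pi\sin\alpha-\alpha$ vanishes at $0$, is negative at $\pi$, and has $\psi'=\pi\cos\alpha-1$ positive then negative, so $\psi$ (and hence $k'$) is positive then negative on $(0,\pi)$; thus $k$ increases then decreases from $0$ back to $0$, so $k\ge0$, which gives $\alpha^{4}/\pi^{2}\le g$. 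This completes the generic case.

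The one step that is not a routine computation is the identification of $d_C(0,z)$ with the fixed–chord isoperimetric minimum; making the passage through Green's theorem fully rigorous — fixing orientations, and justifying that the infimum over horizontal curves is realised by the circular–arc lift with half–angle in $(0,\pi)$ — is where I would be most careful, either by citing the explicit description of Heisenberg geodesics (e.g.\ \cite{calin}) or by recording it as a short separate lemma. Once the formula $d_C(0,z)^{2}=\rho^{2}\alpha^{2}/\sin^{2}\alpha$ is in hand, the algebraic reduction to $g(\alpha)$ and the monotonicity arguments above are immediate.
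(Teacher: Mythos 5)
Your argument is correct, and at its core it is the same proof as the paper's: reduce by left-invariance to distances from the origin, use the fact that the Carnot--Carath\'eodory geodesics project to circular arcs, and then bound a single one-variable function. Indeed, with $\tau=2\alpha$ the paper's ratio $4\tau^{-4}\bigl((1-\cos\tau)^2+(\tau-\sin\tau)^2\bigr)$ is exactly your $g(\alpha)/\alpha^4$, so you are estimating the same quantity. The differences are in execution, and they mostly favour your version. First, you obtain the arc structure of geodesics from the fixed-chord Dido problem via Green's theorem rather than by quoting the parametrization of Monti/Marenich as the paper does; this is a legitimate self-contained route, and you correctly flag that the isoperimetric identification (orientation conventions, existence of the minimising arc with half-angle in $(0,\pi)$) is the one step that needs either a citation or a short lemma --- the algebraic-area subtlety for non-simple projections is real but standard. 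Second, your endgame is more complete than the paper's: where the paper merely asserts that ``by differentiating several times'' the ratio is non-increasing on $[0,2\pi]$, you prove the two-sided bound $\alpha^4/\pi^2\le g(\alpha)\le\alpha^4$ explicitly, using $g'(\alpha)=4\alpha\sin^2\alpha$ together with the monotonicity of $\alpha^4-g$ and the unimodality of $\pi^2g-\alpha^4$ (via the sign change of $\pi\sin\alpha-\alpha$); this is a clean and fully checkable substitute for the paper's unproved monotonicity claim. Your treatment of sharpness via the two degenerate cases $\tau=0$ and $\rho=0$ matches the endpoint evaluations $g(0^+)=1$, $g(2\pi)=\pi^{-2}$ in the paper.
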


\begin{proof}{ Using the left-invariance of $d_C(x,y)$ respectively the group law on $\mathbb{H}$ we transform \eqref{Ungleichung} into
\begin{align} 
 \frac{1}{\pi^{2}}\, d_C(y^{-1}\boxplus x,0)^{4} \leq  \norm{(-y)\boxplus x}_{\mathbb{H}}^{4} \leq d_C(y^{-1}\boxplus x,0)^{4}.
\end{align}
We know that $y^{-1}=-y$. Therefore it is sufficient to prove 
\begin{align*}
\frac{1}{\pi^{2}}\, d_C(z,0)^{4} \leq  \norm{z\boxplus 0}_{\mathbb{H}}^{4} \leq d_C(z,0)^{4} \qquad \forall\ z\in \mathbb{H}.
\end{align*}
The arc joining geodesics starting from the origin 
were computed in  \cite{montii} and \cite{marenich}. The parametrization of these arcs is given by
\begin{align}\label{geod}
z=\gamma_{k,\theta}(t):= 
\begin{cases}
z_1(t,k,\theta) =\displaystyle \frac{\cos(\theta)-\cos(kt+\theta)}{k},\\[0,3cm]
z_2(t,k,\theta)=\displaystyle \frac{\sin(kt+\theta)-\sin(\theta)}{k},\\[0,3cm]
z_3(t,k,\theta)=\frac{kt-\sin(kt)}{2k^{2}}\, ,
\end{cases}
\end{align}
where $t\in[0, \frac{2\pi}{{|k|}}]$ , $\theta \in [0,2\pi)$ and $k \in \mathbb{R}\setminus \{0\}$. This means that for the given point $z:=\gamma_{k,\theta}(t)\in \mathbb{H}$  holds $d(\gamma_{k,\theta}(t),0)=t$.  We extend this formula to the case $k=0$ by taking the limit for $k\to 0$. This gives
\begin{align} \label{geod2}
z=\gamma_{0,\theta}(t):= 
\begin{cases}
z_1(t,0,\theta) =\displaystyle t\sin(\theta),\\
z_2(t,0,\theta)=\displaystyle  t\cos(\theta),\\
\displaystyle z_3(t,0,\theta)=0.
\end{cases}
\end{align}
For the computation of  $d_C(z,0)$ we use \eqref{geod}. It is then sufficient to calculate the supremum and the infimum of
\begin{align*}
\frac{ \norm{\gamma_{k,\theta}(t)\boxplus 0}_{\mathbb{H}}^{4}}{ d_C(z,0)^{4}}= \frac{  4\left( 1-\cos(kt) \right)^{2}+4\left(kt-\sin(kt)  \right)^{2}  }{(tk)^{4}}.
\end{align*}
This leads to estimating the function
\begin{align*}
g(\tau):=\frac{4}{\tau^{4}}\left( \left(1 -\cos(\tau) \right)^{2}+\left(\tau-\sin(\tau)  \right)^{2}   \right),
\end{align*}
with $0\leq \tau \leq 2 \pi$, because $t\in[0, \frac{2\pi}{|k|}]$. To proceed we show that the function $g(\tau)$
is non-increasing on $[0, 2\pi]$. By differentiating the function $g(\tau)$ several times we find that the latter is non-increasing on $[0,2\pi]$ which implies that the same is true for $g$. Hence
\begin{align}
\frac{1}{\pi^{2}} = {g(2\pi) }  \leq g(\tau) \leq \lim_{\tau \to 0+} g(\tau)= 1.
\end{align}
The sharpness of that inequality is an immediate consequence.
}\end{proof}

\begin{proof}[\bf Proof of Theorem {\normalfont\ref{lemmaeuclidC-C}}]
Let $x\in \Omega$ and let $y\in \partial\Omega \cap H(x)$. Consider the curve $\gamma:[0,1] \to \mathbb{H}$ given by the parametrization $\gamma(t)=(1-t)x+ty,\  t\in[0,1]$. Obviously $\gamma$ connects $x$ and $y$. Moreover, since $y\in H(x)$ it is easily verified that $\gamma$ is horizontal. Indeed, we have 
$$
\gamma'(t) = (y_1-x_1)  \left(1,0, \frac{\gamma_2(t)}{2}\right)+(y_2-x_2)  \left(0,1, -\frac{\gamma_1(t)}{2}\right)\, .
$$
By definition of the C-C distance, see equation \eqref{def-cc}, it thus follows that
\begin{align}
d_C(x,y) \ \leq \ \sqrt{(x_1-y_1)^2+(x_2-y_2)^2}.
\end{align}
Using $y\in \partial\Omega \cap H(x)$ we see that 
\begin{align}
\sqrt{(x_1-y_1)^2+(x_2-y_2)^2} \ = \ \norm{(-y)\boxplus x}_{\mathbb{H}}.
\end{align}
Then we apply Lemma \ref{lemmasharpestimate} to obtain the following chain of inequalities
\begin{align}
d_C(x,y) \ \leq \  \sqrt{(x_1-y_1)^2+(x_2-y_2)^2} \ = \  \norm{(-y)\boxplus x}_{\mathbb{H}} \leq \ d_C(x,y).
\end{align}
Taking the infimum over $y\in \partial\Omega \cap H(x)$ yields the result.
\end{proof}

\subsection{The Hardy inequality involving \texorpdfstring{$\omega$}{}} 

\noindent We need the following auxiliary result.

\begin{lemma}\label{pseudodist}
Let $\Omega$ be an open bounded domain in $\mathbb{H}$. Then holds
\begin{align}
\int_{\Omega}\left( \frac{|u(x)|^{2}}{d_{1}(x)^{2}}+ \frac{|u(x)|^{2}}{d_{2}(x)^{2}}\right) \df x \ \leq 4\int_{\Omega} |\nabla_{\mathbb{H}}\, u(x)|^{2} \df x
\end{align}
for all $u\in C_{0}^{\infty}(\Omega)$, where the distances $d_{1}(x)$ and $d_{2}(x)$ are given by
\begin{align}
d_{1}(x) \ &:= \ \inf_{s\in\R} \, \{|s|>0 \, |\, x+s(1,0,x_2/2) \notin \Omega  \}, \\
d_{2}(x) \ &:= \ \inf_{s\in\R}\, \{|s|>0 \,|\, x+s(0,1,-x_1/2) \notin \Omega \}.
\end{align}
\end{lemma}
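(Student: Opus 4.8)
The plan is to reduce the two-dimensional (really, three-dimensional) Hardy inequality to a family of genuinely one-dimensional Hardy inequalities along the integral curves of $X_1$ and $X_2$, and then integrate. Recall the classical one-dimensional Hardy inequality: for an open set $I\subset\R$ and $f\in C_0^\infty(I)$, writing $\rho(t):=\mathrm{dist}(t,\R\setminus I)$, one has $\int_I |f(t)|^2\rho(t)^{-2}\,\df t\le 4\int_I |f'(t)|^2\,\df t$, with constant $4$ sharp. The key geometric observation is that the vector fields $X_1$ and $X_2$ are, each individually, \emph{translation-invariant} in a suitable sense: the integral curve of $X_1$ through $x=(x_1,x_2,x_3)$ is $s\mapsto x+s(1,0,x_2/2)$, and along this curve $X_1$ acts simply as $\df/\df s$; similarly the integral curve of $X_2$ through $x$ is $s\mapsto x+s(0,1,-x_1/2)$ and $X_2$ acts as $\df/\df s$ along it. Hence $d_1(x)$ is exactly the distance, along the $X_1$-curve through $x$, from $x$ to the complement of $\Omega$, and likewise for $d_2(x)$.

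First I would set up coordinates adapted to $X_1$. Fix $(x_2,x_3)\in\R^2$ and consider the line $\ell_{x_2,x_3}:=\{(t+ c, x_2, x_3 + c x_2/2): c\in\R\}$ — more cleanly, parametrize points on the $X_1$-integral curve by a single real parameter; the map $(x_1,x_2,x_3)\mapsto$ (curve label, position on curve) is a volume-preserving change of variables because the flow of $X_1$ preserves Lebesgue measure (it is a shear). Restricting $u\in C_0^\infty(\Omega)$ to such a curve gives a function $f$ supported in the open subset $I\subset\R$ cut out by $\Omega$, with $f' = (X_1 u)$ evaluated along the curve, and with $\rho(t)=d_1(x)$ at the corresponding point. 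Applying the one-dimensional Hardy inequality on each curve and integrating over all curves with Fubini (using the volume-preserving property) yields
\begin{align}\label{eq:X1hardy}
\int_\Omega \frac{|u(x)|^2}{d_1(x)^2}\,\df x \ \le\ 4\int_\Omega |X_1 u(x)|^2\,\df x.
\end{align}
The identical argument with the flow of $X_2$ gives $\int_\Omega |u|^2 d_2^{-2}\,\df x\le 4\int_\Omega |X_2 u|^2\,\df x$. Adding the two inequalities and using $|\nabla_{\mathbb H} u|^2 = |X_1 u|^2 + |X_2 u|^2$ gives exactly the claimed estimate, with the same constant $4$.

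The main obstacle — really the only point requiring care — is the justification of the change of variables and the application of Fubini: one must check that the flow $\Phi^1_s(x)=x+s(1,0,x_2/2)$ of $X_1$ is a measure-preserving diffeomorphism of $\R^3$ (its Jacobian is unipotent, hence has determinant $1$), that the foliation of $\R^3$ by $X_1$-integral curves is parametrized measurably by a transversal (e.g.\ the plane $x_1=0$), and that for fixed curve the restriction of $u$ is $C_0^\infty$ of the open one-dimensional slice $\{s: \Phi^1_s(x)\in\Omega\}$ with the right boundary-distance and the right derivative. A subtlety worth a remark is that the one-dimensional slice of $\Omega$ need not be connected, but the one-dimensional Hardy inequality holds on arbitrary open subsets of $\R$ (apply it componentwise), so this causes no loss. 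Once these bookkeeping points are in place, the proof is complete; no sharpness claim is made here, so we need not track optimality of constants beyond the value $4$ inherited from the one-dimensional inequality.
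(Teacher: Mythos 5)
Your proposal is correct and follows essentially the same route as the paper: straighten each vector field $X_j$ by a change of variables adapted to its integral curves (the paper uses the explicit parametrization $F(t,\varphi,\theta)=(t+\varphi,\theta,t\theta/2)$ with Jacobian $\theta/2$, constant along each curve, where you use the unimodular flow itself), apply the one-dimensional Hardy inequality on each connected component of the one-dimensional slice, integrate via Fubini, and sum the two resulting estimates. The only cosmetic difference is your choice of a volume-preserving parametrization with Jacobian $1$ versus the paper's Jacobian $|\theta|/2$, which in either case factors out of the one-dimensional inequality.
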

\begin{proof} Let $u\in C_{0}^\infty(\Omega)$. First we show that 
\begin{align}\label{gleichungeins}
\int_{\Omega} \frac{|u(x)|^{2}}{d_{1}(x)^{2}} \df x \ \leq 4\int_{\Omega} |X_1u(x)|^{2} \df x.
\end{align}
To this end we define the following coordinate transformation
\begin{align}\label{neuekoordinaten}
F(t,\varphi,\theta) \ := \
\begin{cases}
  x_{1}(t,\varphi,\theta) = t+\varphi, \\
 x_2(t,\varphi,\theta)= \theta, \\
 x_3(t,\varphi,\theta) = {t\theta/2},
\end{cases}
\end{align}
where $(t,\varphi,\theta)\in A:=\{(t,\varphi,\theta)\in \mathbb{R}^{3} \ | \ \theta\neq 0\}$. It can be easily checked that $F: A \mapsto \mathrm{Ran}(A)$ is a diffeomorphism and that the determinant of $F$ is equal to $\theta/2$. For a given $x\in \Omega^{c}$ we set $u(x)=0$. If $x=F(t,\varphi,\theta)$ for fixed $\theta \in \mathbb{R}\setminus \{0\}$ and $\varphi \in \mathbb{R}$ we see that there exists a constant $c\in \mathbb{R}$ such that $F(c,\varphi,\theta)=\hat{x}\in\partial\Omega$ fulfills $d_1(x)=d_C(x,\hat{x})$. By  $\{a_j\}_{j\in\mathbb{N}}$ we denote the increasing sequence such that $F(a_j,\varphi,\theta)\in \partial \Omega$.
Thus for a fixed $x\in \Omega$ we immediately see, that there exists  a $k\in \mathbb{N}$ such that
\begin{align*}
d_{1}(F(t,\varphi,\theta))=d_C(F(t,\varphi,\theta),F(a_k,\varphi,\theta))&=d_C(F(t,\varphi,\theta),F(t,\varphi,\theta)+(a_k-t)(1,0,\theta/2))\\
&= |a_k-t|
\end{align*}
By the last observation we apply then the transformation $F$ to find out that to prove \eqref{gleichungeins} it suffices to show that
\begin{equation}\label{hardnachkootrafo}
\begin{split}
 \int_{\mathbb{R}}\int_{\mathbb{R}}  &\sum_{j=1}^{\infty} \int_{a_j}^{a_{j+1}} \frac{|u(t,\varphi,\theta)|^{2}}{\delta_j(t)^{2}} \df  t \frac{|\theta|}{2} \df \theta \df \varphi  \\
  &\leq \ 4  \int_{\mathbb{R}}\int_{\mathbb{R}}  \sum_{j=1}^{\infty} \int_{a_j}^{a_{j+1}} {|\partial_tu(t,\varphi,\theta)|^{2}}\df  t \frac{|\theta|}{2}\df  \theta \df \varphi ,
\end{split}
\end{equation}
where $\delta_{j}(t):=\inf(a_{j+1}-t,t-a_j)$. Hence the one-dimensional Hardy inequality in the $t$-direction then implies that \eqref{hardnachkootrafo} holds true which in turn yields  \eqref{gleichungeins}.  It remains to prove
\begin{align}\label{gleichungzwei}
\int_{\Omega} \frac{|u(x)|^{2}}{d_{2}(x)^{2}} \df x \ \leq 4\int_{\Omega} |X_2u(x)|^{2} \df x.
\end{align}
This is done in the same way as \eqref{gleichungeins} replacing the transformation of \eqref{neuekoordinaten} by 
\begin{align}
\tilde{F}(t,\varphi,\theta) \ := \
\begin{cases}
  x_{1}(t,\varphi,\theta) = \theta, \\
 x_2(t,\varphi,\theta)= t+\varphi , \\
 x_3(t,\varphi,\theta) = {-t\theta/2},
\end{cases}
\end{align}
for $(t,\varphi,\theta)\in A$. Summing up \eqref{gleichungeins} and \eqref{gleichungzwei} then completes the proof. 
\end{proof}

 \begin{proof}[\bf Proof of Theorem {\normalfont\ref{mainresultinthree}}]
Let $a:=(a_1,a_2,a_3)\in \partial \Omega \cap H(x)$ be such that
$$
 \omega(x)=\sqrt{(x_1-a_1)^2+(x_2-a_2)^2}.
$$ 
The existence of such $a$ is guaranteed by the compactness of $\overline{\Omega}$ and the continuity of the distance. We know that all connected components of $H(x)\cap  \Omega$ are convex. Therefore we assume without loss of generality that $H(x)\cap \Omega$ consists of a single connected component, which is convex. Next we apply the hyperplane separation theorem, which implies that the hyperplane
\begin{align}\label{planeone}
T:=\menge{y\in \mathbb{H}}{\sprod{\begin{pmatrix}
 x_1-a_1\\
 x_2-a_2\\
 0
\end{pmatrix}}{\begin{pmatrix}
 y_1-a_1\\
 y_2-a_2\\
 y_3
\end{pmatrix}}=0}
\end{align}
separates $H(x)\cap \Omega$ from the point $a \in \partial \Omega$. We consider the definition of $d_1(x)$, see Lemma \ref{pseudodist}, and compute the intersection point of the line $c(s)=x+s(1,0,1/2x_2)^t$ for $s\in \mathbb{R}$ with the hyperplane \eqref{planeone}. This yields
\begin{align}
s= -\frac{(x_1-a_1)^2+(x_2-a_2)^2}{x_1-a_1}.
\end{align}
At this point we apply the hyperplane separation theorem again to infer that 
$$
d_{1}(x)\leq   \frac{(x_1-a_1)^2+(x_2-a_2)^2}{|x_1-a_1|}.
$$
Now we do the same computation for $d_2(x)$ and obtain
\begin{align}
d_{2}(x) \leq  \frac{(x_1-a_1)^2+(x_2-a_2)^2}{|x_2-a_2|}.
\end{align}
Altogether we get
\begin{equation*}
 \frac{1}{d_{1}(x)^{2}}+  \frac{1}{d_{2}(x)^{2}}  \ \geq \ \frac{(x_2-a_2)^2}{((x_1-a_1)^2+(x_2-a_2)^2)^2}+\frac{(x_1-a_1)^2}{((x_1-a_1)^2+(x_2-a_2)^2)^2}=\frac{1}{\omega(x)^2}.
\end{equation*}
We recall that the point $a\in \partial \Omega \cap H(x)$ was chosen such that  $\omega(x)=\sqrt{(x_1-a_1)^2+(x_2-a_2)^2}$, which proves inequality \eqref{hardy-omega}. 
\end{proof}
\begin{remark}For $p\geq2$ it is possible to get an $L^{p}$ version of Theorem \ref{mainresultinthree} as well. In Lemma \ref{pseudodist} we use the $L^p$ version of the one-dimensional Hardy inequality, which holds for $p>1$. Then we mimic the last proof and apply for $p\geq 2$ Jensen's inequality
\begin{align*}
(a^2+b^2)^{p/2}= 2^{p/2}(a^2/2+b^2/2)^{p/2}\leq 2^{p/2-1}(a^p+b^p),
\end{align*}
for $a,b>0$.
\end{remark}

\section{ \bf Proof of the Hardy inequalities for open bounded convex polytopes}\noindent
In this section we give the proof of Theorem \ref{maineins}. First we have to give some  lower estimates for the Kaplan distance function to hyperplanes, which are not orthogonal to the $x_3=0$ hyperplane. Therefore we need the following;
\begin{lemma}
Let $p>0$ and $q\in\mathbb{R}\setminus\{0\}$. Consider 
\begin{align*}
z^{3}+pz=q,
\end{align*}
for $z\in\mathbb{R}$. Then there exists a unique real solution for which holds
\begin{align*}
|z|\geq    \frac{|q^{1/3}|}{3}\left( 1+\frac{p\sqrt{p}}{|q|3\sqrt{3}}\right)^{-2/3}.
\end{align*}
\end{lemma}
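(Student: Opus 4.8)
The plan is to exploit the strict monotonicity of the cubic together with a scaling to reduce everything to a one‑variable elementary inequality. Write $f(z):=z^{3}+pz$, so that $f'(z)=3z^{2}+p\ge p>0$; hence $f$ is continuous and strictly increasing with range all of $\mathbb{R}$, which yields existence and uniqueness of the real root $z$ in one stroke, and moreover $\sgn(z)=\sgn(q)$ because $f(0)=0$. Since replacing $(z,q)$ by $(-z,-q)$ changes neither $p$ nor either side of the asserted bound, I may and will assume $q>0$, so that $z>0$.

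Next I would rescale: put $b:=p/(3q^{2/3})>0$ and $t:=z/q^{1/3}>0$. Dividing $z^{3}+pz=q$ by $q$ turns the equation into $t^{3}+3bt=1$, while $b^{3/2}=p^{3/2}/(3\sqrt{3}\,q)$, so the inequality to be proved is exactly $t\ge \tfrac13(1+b^{3/2})^{-2/3}=:t_{0}$. As $t\mapsto t^{3}+3bt$ is strictly increasing and takes the value $1$ at $t$, this is equivalent to $t_{0}^{3}+3bt_{0}\le 1$, i.e.
\begin{align*}
\tfrac1{27}\,(1+b^{3/2})^{-2}+b\,(1+b^{3/2})^{-2/3}\ \le\ 1 .
\end{align*}

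Finally I would verify this last inequality. Setting $s:=(1+b^{3/2})^{1/3}\ge 1$, so that $b=(s^{3}-1)^{2/3}$, and multiplying through by $s^{6}$, it becomes $\tfrac1{27}+s^{4}(s^{3}-1)^{2/3}\le s^{6}$; since $s^{4}(s^{3}-1)^{2/3}=s^{6}(1-s^{-3})^{2/3}$, the substitution $u:=s^{-3}\in(0,1]$ and division by $s^{6}=u^{-2}$ bring it to
\begin{align*}
(1-u)^{2/3}\ \le\ 1-\tfrac{u^{2}}{27},\qquad u\in[0,1],
\end{align*}
which follows from $(1-u)^{2/3}\le 1-\tfrac23 u$ (concavity of $x\mapsto x^{2/3}$, equivalently Bernoulli's inequality) together with $\tfrac23 u\ge \tfrac{u^{2}}{27}$ on $[0,1]$. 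Undoing the scaling then gives $z=q^{1/3}t\ge q^{1/3}t_{0}$, which is the claim. The only point needing a little care is this last reduction: estimating $(1+b^{3/2})^{-2}$ by the larger $(1+b^{3/2})^{-2/3}$ is too wasteful when $b$ is large, so one must keep both exponents and let the single variable $s$ (equivalently $u$) carry them simultaneously; everything else is monotonicity and Bernoulli.
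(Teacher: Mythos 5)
Your proof is correct, and it takes a genuinely different route from the paper's. The paper invokes Cardano's formula for the depressed cubic (the positive discriminant $q^{2}/4+p^{3}/27>0$ giving uniqueness), rewrites the resulting difference of cube roots as $\tfrac13\int_{D-q/2}^{D+q/2}s^{-2/3}\,\df s$ with $D=\sqrt{q^{2}/4+p^{3}/27}$, bounds this below by the interval length times the integrand at the right endpoint, and finishes with $D+q/2\le q+\sqrt{p^{3}/27}$. You instead get existence, uniqueness and the sign of $z$ from strict monotonicity of $z\mapsto z^{3}+pz$, normalize by the scaling $t=z/q^{1/3}$, and then verify that the candidate lower bound $t_{0}$ satisfies $t_{0}^{3}+3bt_{0}\le 1$, which after your substitutions reduces to $(1-u)^{2/3}\le 1-u^{2}/27$ on $[0,1]$, settled by Bernoulli. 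Your argument is more elementary and self-contained (no explicit root formula, no integral representation), at the cost of a slightly longer chain of substitutions; the paper's argument is shorter once one accepts Cardano, and its integral trick is the step that produces exactly the same constant you obtain. Your closing remark about why the naive bound $(1+b^{3/2})^{-2}\le(1+b^{3/2})^{-2/3}$ would be too lossy for large $b$ is accurate and shows the reduction to a single variable is genuinely needed.
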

\begin{proof}
First we consider the case $q>0$. Then Cardano's formula gives the unique real solution
\begin{align*}
z&= \left(q/2+\sqrt{q^2/4+p^3/27} \right)^{1/3}+\left(q/2-\sqrt{q^2/4+p^3/27} \right)^{1/3}\\
&= \frac{1}{3}\int_{-q/2+\sqrt{q^2/4+p^3/27}}^{q/2+\sqrt{q^2/4+p^3/27}}  s^{-2/3} \df s\geq \frac{q}{3}\left( q/2+\sqrt{q^2/4+p^3/27}\right)^{-2/3}\\
&\geq \frac{q}{3}\left( q+\sqrt{p^3/27}\right)^{-2/3}
\end{align*}
The case $q<0$ will be treated in the same way.

\end{proof}

\begin{prop}\label{propohyperplane}
Let $x\in \mathbb{H}$ and $a>0$. We consider
\begin{align*}
\Pi:=\{y\in \mathbb{H} | \ n_1y_1+n_2y_2+n_3y_3=c\},
\end{align*}
where $n_1,n_2,n_3,c\in \mathbb{R}$ with $n_3\neq0$. For the case $(-2n_2/n_3+x_1)^2+(2n_1/n_3+x_2)^2\leq a|-c/n_3+x_3+x_1n_1/n_3+x_2n_2/n_3|$ it holds
\begin{align*}
\left( \inf\limits_{y\in {\Pi}} \norm{(-y)\boxplus x}_{\mathbb{H}}\right)^2\geq    \frac{|-c/n_3+x_3+x_1n_1/n_3+x_2n_2/n_3|}{4\cdot 3^3}\left( 1+   \frac{a}{48\sqrt{6}}  \right)^{-2}
\end{align*}
and for $(-2n_2/n_3+x_1)^2+(2n_1/n_3+x_2)^2\geq a|-c/n_3+x_3+x_1n_1/n_3+x_2n_2/n_3|$ it holds
\begin{align*}
\left( \inf\limits_{y\in \Pi} \norm{(-y)\boxplus x}_{\mathbb{H}}\right)^2 \geq  \frac{4|-c/n_3+x_3+x_1n_1/n_3+x_2n_2/n_3|^{2}}{(-2n_2/n_3+x_1)^2+(2n_1/n_3+x_2)^2}     \left( \frac{48\sqrt{6}}{a}+  1  \right)^{-4/3},
\end{align*}
\end{prop}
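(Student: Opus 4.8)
The plan is to reduce everything to a one-variable optimization along the geodesic family \eqref{geod}, and then invoke the cubic-equation lemma just proved. First I would use the left-invariance of the Kaplan gauge and the C-C distance: writing $z := (-x)\boxplus y$ as the point runs over $\Pi$, the quantity $\norm{(-y)\boxplus x}_{\mathbb{H}} = \norm{z}_{\mathbb{H}}$, and the condition $y\in\Pi$ becomes a linear condition on $z$ of the form $m_1 z_1 + m_2 z_2 + m_3 z_3 = d$ for suitable coefficients (computed from $n_1,n_2,n_3,c,x$ via the group law). A short computation identifies $d/m_3$, up to the geometric factors, with $-c/n_3 + x_3 + x_1 n_1/n_3 + x_2 n_2/n_3$, and $(m_1/m_3, m_2/m_3)$ with $(-2n_2/n_3 + x_1,\, 2n_1/n_3 + x_2)$ up to sign; so the two regimes in the statement are exactly the two regimes "$z$-plane nearly horizontal" vs. "$z$-plane steep" after this normalization. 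Thus it suffices to lower-bound $\inf\{ \norm{z}_{\mathbb{H}}^2 : z \text{ on a fixed plane through generic position}\}$ in terms of those two data.

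Next I would parametrize $z = \gamma_{k,\theta}(t)$ by the geodesics \eqref{geod} (so that $d_C(z,0) = t$) and also record, from Lemma \ref{lemmasharpestimate}, that $\norm{z}_{\mathbb{H}}^2 \geq \tfrac{1}{\pi} d_C(z,0)^2 = \tfrac{1}{\pi} t^2$ — but more usefully I would work directly with $\norm{z}_{\mathbb{H}}^4 = (z_1^2+z_2^2)^2 + 16 z_3^2$. Imposing the plane constraint on the explicit trigonometric coordinates $z_1(t,k,\theta), z_2(t,k,\theta), z_3(t,k,\theta)$ and expanding for the extremal $z$, one gets a relation tying $t^2$ (equivalently $z_3$, since $z_3 \sim t^3$ for small $kt$) to the linear data. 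Concretely: for the plane constraint to be satisfiable one needs $|z_3|$ comparable to $|d/m_3|$ minus a term controlled by $\sqrt{z_1^2+z_2^2}\cdot|m_1/m_3|$-type quantities, and the worst case is balanced exactly at the stated threshold $a$. Setting $z^{} := (z_1^2+z_2^2)^{1/2}$ or rather the natural cubic variable, the constraint takes the shape $Z^3 + pZ = q$ with $p$ proportional to $(m_1/m_3)^2+(m_2/m_3)^2$ and $q$ proportional to $|d/m_3|$; the previous lemma then gives $|Z| \geq \tfrac{|q|^{1/3}}{3}(1 + \tfrac{p\sqrt p}{|q| 3\sqrt 3})^{-2/3}$, and unwinding $\norm{z}_{\mathbb{H}}^2$ in terms of $Z$ yields the two displayed bounds — in the first regime the $p$-term dominates so one reads off the $(1 + a/(48\sqrt6))^{-2}$ factor, in the second the $q$-term dominates giving the $(48\sqrt6/a + 1)^{-4/3}$ factor. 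The numerical constants $4\cdot 3^3$, $48\sqrt 6$ will fall out of tracking the $16$ in $\norm{\cdot}_{\mathbb{H}}^4$ and the $\tfrac12$'s in the geodesic formulas.

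The main obstacle I anticipate is the second step: the constrained minimization of $\norm{\gamma_{k,\theta}(t)}_{\mathbb{H}}^2$ over the plane $\Pi$ is genuinely a minimization over both $t$ and the two geodesic parameters $(k,\theta)$, and the geodesic coordinates are transcendental. The trick is not to solve it exactly but to bound it below: replace $(1-\cos(kt))$, $(kt - \sin(kt))$ by their convexity/monotonicity estimates (as in Lemma \ref{lemmasharpestimate}) to convert the transcendental constraint into the polynomial inequality $Z^3 + pZ \leq q$ (with an honest constant), so that only the \emph{direction} in the $(z_1,z_2)$-plane and the sign of $z_3$ remain free, and those are handled by Cauchy--Schwarz. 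Getting the constants $48\sqrt6$ etc.\ to come out as stated — rather than some larger admissible constant — will require being somewhat careful about which elementary inequality is used at the transcendental-to-polynomial step, but no conceptual difficulty beyond that. Once the problem is in the form $Z^3 + pZ = q$, the lemma does the rest and the case split is exactly the comparison of $p\sqrt p$ with $|q|$, i.e.\ the threshold $a$.
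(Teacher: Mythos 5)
Your endgame is right --- reduce to a normalized plane by a left translation, extract a cubic $z^3+pz=q$, apply the Cardano lemma, and split into two regimes according to whether the $p$-term or the $q$-term dominates --- but the mechanism by which you propose to obtain that cubic is not the one that works, and this is where the actual content of the proposition lives. In the paper the cubic is the \emph{first-order optimality condition} of the explicit quartic $\norm{(-y)\boxplus x}_{\mathbb{H}}^4$, which is a polynomial in Cartesian coordinates, restricted to the plane: one first translates the plane to $\{y_3=0\}$ via $v=\tfrac{1}{n_3}(-2n_2,2n_1,-c)$ (this is exactly why the combinations $-2n_2/n_3+x_1$, $2n_1/n_3+x_2$ and $-c/n_3+x_3+x_1n_1/n_3+x_2n_2/n_3$ appear in the statement), eliminates $y_3$, sets the two partial derivatives in $y_1,y_2$ to zero, deduces $z_1=-z_2x_2/x_1$, and the remaining one-variable critical-point equation is precisely $z_2^3+pz_2=q$ with $p=x_1^2/128$ and $q=x_3x_1^3/\bigl(64(x_1^2+x_2^2)\bigr)$. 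No geodesics enter anywhere. Your plan instead routes the problem through the geodesic parametrization $\gamma_{k,\theta}(t)$ and asserts that the \emph{plane constraint itself} ``takes the shape $Z^3+pZ=q$'' after transcendental-to-polynomial estimates. That step is not established: in geodesic coordinates the constraint is a genuinely three-parameter transcendental condition in $(t,k,\theta)$, the coefficient of the would-be cubic term depends on $k$, and the residual minimization over $(k,\theta)$ is dispatched with ``Cauchy--Schwarz'' rather than carried out. Moreover, any lossy replacement of $1-\cos(kt)$ and $kt-\sin(kt)$ by polynomial bounds degrades constants, whereas $4\cdot 3^3$ and $48\sqrt{6}$ are the exact output of the Cartesian critical-point computation fed into the Cardano lemma; you flag this risk yourself but do not resolve it.

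In short, the left-invariance reduction and the role of the cubic lemma are correctly identified, but the derivation of the cubic --- the heart of the proof --- is missing, and the geodesic detour you propose for it both overcomplicates the minimization and cannot be expected to reproduce the stated constants. The repair is to drop the geodesics entirely, keep Cartesian coordinates, normalize the plane to $\{y_3=0\}$ rather than the point to the origin, and minimize the explicit quartic directly; Lemma \ref{lemmasharpestimate} and the geodesic formulas are needed only elsewhere (for the continuity of $\delta_K$ and for converting the final Kaplan-gauge bound back to the C-C distance).
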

\begin{proof}First of all we consider the case $n_1=n_2=c=0$ and $n_3=1$. Let $y\in\mathbb{H}$ such that $y_3=0$ and fix $x:=(x_1,x_2,x_3)\in \mathbb{H}$ with $x_3\neq 0$.  We set $z_1:=y_1-x_1$ and $z_2:=y_2-x_2$ and consider

\begin{equation}\label{poujwwodmn}
\begin{split}
\norm{(-y)\boxplus x}_{\mathbb{H}}^4&=( {(y_1-x_1)^2+(y_2-x_2)^2})^2+\frac{1}{16}({y_3-x_3-1/2y_1x_2+1/2y_2x_1} )^2\\
&= ( z_1^2+z_2^2)^2+\frac{1}{16}(-x_3-1/2z_1x_2+1/2z_2x_1 )^2.
\end{split}
\end{equation}
Then we compute the minimum of the right-hand side in dependence of $x$. We assume that $x_1\neq 0$, since $x_1=0$ is a null set and $\delta_K$ is continuous, see \eqref{heisenbergeuclideanballs}, \eqref{asdasdasdasfdfd}, Lemma \ref{lemmasharpestimate}. The derivatives respectively $y_1$ and $y_2$ yield then
\begin{equation}\label{poujwwodmn2}
 \begin{split}
 &( z_1^2+z_2^2)4z_1-x_2\frac{1}{16}(-x_3-1/2z_1x_2+1/2z_2x_1 )=0,\\
&( z_1^2+z_2^2)4z_2+x_1\frac{1}{16}(-x_3-1/2z_1x_2+1/2z_2x_1)=0.
\end{split}
\end{equation}
Since $x_1\neq 0$ we easily deduce that $z_1^2+z_2^2\neq0$ and obtain
\begin{align*}
z_1=\frac{-z_2x_2}{x_1}.
\end{align*}
Inserting this in \eqref{poujwwodmn} yields
\begin{align}\label{poujwwodmn3}
 \norm{(-y)\boxplus x}_{\mathbb{H}}^4 = z_2^4\frac{(x_2^2+x_1^2)^2}{x_1^4}+\frac{1}{16}\left(-x_3+1/2z_2\frac{x_2^2+x_1^2}{x_1}\right)^2.
\end{align}
We compute the critical points respectively $y_2$ and obtain
\begin{align}\label{poujwwodmn4}
 \norm{(-y)\boxplus x}_{\mathbb{H}}^4 &= z_2^4\frac{(x_2^2+x_1^2)^2}{x_1^4}+256z_2^{6}\frac{(x_2^2+x_1^2)^2}{x_1^6},
\end{align}
where $z_2$ is the unique real solution of
\begin{align*}
z_2^3+z_2\frac{x_1^2}{128}=\frac{x_3x_1^3}{x_2^2+x_1^2}\frac{1}{64}, \quad p:= \frac{x_1^2}{128}, \quad q:=\frac{x_3x_1^3}{x_2^2+x_1^2}\frac{1}{64} .
\end{align*}
Using the estimate in the previous Lemma we obtain 
\begin{align}\label{poujwwodmn5}
|z_2|\geq    \frac{|x_3|^{1/3}|x_1|}{12\sqrt[1/3]{x_1^2+x_2^2}}\left( 1+   \frac{x_1^2+x_2^2}{|x_3|48\sqrt{6}}  \right)^{-2/3}
\end{align}
For the case $x_1^2+x_2^2\leq a|x_3|$ we use   
\begin{align*}
  \norm{(-y)\boxplus x}_{\mathbb{H}}^4 &\geq 256z_2^{6}\frac{(x_2^2+x_1^2)^2}{x_1^6},
\end{align*}
and \eqref{poujwwodmn5} to get
\begin{align*}
\left( \inf\limits_{y\in \mathbb{H}, y_3=0} \norm{(-y)\boxplus x}_{\mathbb{H}}\right)^2\geq    \frac{|x_3|}{4\cdot 3^3}\left( 1+   \frac{a}{48\sqrt{6}}  \right)^{-2}.
\end{align*}
For the case $x_1^2+x_2^2\geq a|x_3|$ we use \eqref{poujwwodmn5}  again for
\begin{align*}
  \norm{(-y)\boxplus x}_{\mathbb{H}}^4 &\geq z_2^4\frac{(x_2^2+x_1^2)^2}{x_1^4},
\end{align*}
which yields
\begin{align*}
\left( \inf\limits_{y\in \mathbb{H}, y_3=0} \norm{(-y)\boxplus x}_{\mathbb{H}}\right)^2 \geq  \frac{4|x_3|^{2}}{(x_2^2+x_1^2)}     \left( \frac{48\sqrt{6}}{a}+  1  \right)^{-4/3},
\end{align*}
To obtain the result for a general hyperplane we consider
\begin{align*}
\inf\limits_{y\in \Pi}\norm{(-y)\boxplus x}_{\mathbb{H}} = \inf\limits_{y\in \Pi}\norm{(-(v\boxplus y))\boxplus(v\boxplus x)}_{\mathbb{H}}= \inf\limits_{(-v)\boxplus q\in \Pi}\norm{(-q)\boxplus(v\boxplus x)}_{\mathbb{H}},
\end{align*}
where $q:=(q_1,q_2,q_3)\in\mathbb{H}$, and $v\in\mathbb{H}$ is set
\begin{align*}
v:= \frac{1}{n_3}\left(-{2n_2}, {2n_1}, -c \right).
\end{align*}
Then $(-v)\boxplus q\in \Pi$ is equivalent to $q_3=0$, which yields the result.
\end{proof}


\begin{proof}[Proof of Theorem {\normalfont\ref{maineins}:}] Let us assume that $\Omega$ is an open bounded convex polytope. Let $m\in\mathbb{N}$ be the number of hyperplanes of $\partial \Omega$, which are not orthogonal to the hyperplane $y_3=0$. We denote these hyperplanes by $\Pi_{j}$ for $1\leq j \leq m$. Thus there exist  $n_{1,j},n_{2,j},n_{3,j},c_j\in\mathbb{R}$ such that
\begin{align*}
\Pi_j:=\{y\in \mathbb{H} | \ n_{1,j}y_1+n_{2,j}y_2+n_{3,j}y_3=c_j\},
\end{align*}
where $n_{3,j}\neq 0$ for $1\leq j \leq m$. By $n_j\in\R^3$ we denote the unit normal of $\Pi_j$. We use Lemma \ref{pseudodist} and inequality \eqref{onepointyang} to obtain
\begin{align}\label{weight12}
\int_{\Omega}\left( \frac{1}{d_{1}(x)^{2}}+ \frac{1}{d_{2}(x)^{2}} + \frac{1}{m}\sum_{j=1}^m\frac{1}{d_C(x,a_j)^2} \right)|u(x)|^2 \df x \ \leq 5\int_{\Omega} |\nabla_{\mathbb{H}}\, u(x)|^{2} \df x
\end{align}
for $u\in C_0^\infty(\Omega)$, where 
\begin{align*}
a_j:=  \frac{1}{n_{3,j}}\left({2n_{2,j}}, -{2n_{1,j}}, c_j \right).
\end{align*}
The aim is to give a pointwise estimate for the weights on the left-hand side from below. We take $b\in \partial \Omega$ such that $\delta_{K}(x)=\norm{(-b)\boxplus x}_{\mathbb{H}}$, which exists, since $\partial\Omega$ is compact and $\delta_K$ is continuous. 

The first case is $b\in \Pi_j$ for a fixed $j$. Since $\Omega$ is convex we compute the intersection point of $d_1(x)$ with $\Pi_j$ as well as the intersection point of $d_2(x)$ with $\Pi_j$. The hyperplane separation theorem yields then
\begin{align*}
\frac{1}{d_{1}(x)^{2}}+ \frac{1}{d_{2}(x)^{2}} \geq \frac{1}{4}\frac{(-2n_{2,j}+x_1n_{3,j})^2+(2n_{1,j}+x_2n_{3,j})^2}{|-c_j+\sprod{x}{n_j}|^{2}}.
\end{align*}
Let $a>0$. We use Proposition \ref{propohyperplane} for the case $(-2n_{2,j}/n_{3,j}+x_1)^2+(2n_{1,j}/n_{3,j}+x_2)^2\geq a|-c/n_3+x_3+x_1n_1/n_3+n_2x_2/n_3|$ which yields
\begin{align*}
\frac{1}{d_{1}(x)^{2}}+ \frac{1}{d_{2}(x)^{2}} \geq \left( \frac{48\sqrt{6}}{a}+  1  \right)^{-4/3}\left( \inf\limits_{y\in \Pi_j} \norm{(-y)\boxplus x}_{\mathbb{H}}\right)^{-2}.
\end{align*}
 For the case $(-2n_{2,j}/n_{3,j}+x_1)^2+(2n_{1,j}/n_{3,j}+x_2)^2\leq a|-c/n_3+x_3+x_1n_1/n_3+n_2x_2/n_3|$ we use Lemma \ref{lemmasharpestimate}
 \begin{align*}
 \frac{1}{m}\sum_{k=1}^m\frac{1}{d_C(x,a_k)^2} &\geq \frac{1}{\pi m} \frac{1}{\norm{-a_j\boxplus x}^2_{\mathbb{H}}} 
 \geq \frac{1}{\pi m\sqrt{a^2+16}}|-c/n_3+x_3+x_1n_1/n_3+n_2x_2/n_3|^{-1}
\end{align*}
and then again Proposition \ref{propohyperplane} yields
 \begin{align*}
 \frac{1}{m}\sum_{k=1}^m\frac{1}{d_C(x,a_k)^2} \geq  \frac{1}{4\cdot 3^3\pi m\sqrt{a^2+16}} \left( 1+   \frac{a}{48\sqrt{6}}  \right)^{-2}\left( \inf\limits_{y\in {\Pi}_j} \norm{(-y)\boxplus x}_{\mathbb{H}}\right)^{-2}.
\end{align*}
We choose  $a>0$ such that
\begin{align*}
  \frac{1}{4\cdot 3^3\pi m\sqrt{a^2+16}} \left( 1+   \frac{a}{48\sqrt{6}}  \right)^{-2}=\left( \frac{48\sqrt{6}}{a}+  1  \right)^{-4/3}
\end{align*}
is fulfilled, which obviously exists. The positive constant, which fulfills that equation is denoted by $c_{m}$. If we summaries our estimates the weight function in \eqref{weight12} is then bounded from below by
\begin{align*}
 \left( \frac{48\sqrt{6}}{c_m}+  1  \right)^{-4/3}\left( \inf\limits_{y\in {\Pi}_j} \norm{(-y)\boxplus x}_{\mathbb{H}}\right)^{-2}&\geq  \left( \frac{48\sqrt{6}}{c_m}+  1  \right)^{-4/3}\norm{(-b)\boxplus x}_{\mathbb{H}}^{-2}
\end{align*}
where we used $b\in \Pi_j$. We recall that $b$ was chosen such that  $\delta_{K}(x)=\norm{(-b)\boxplus x}_{\mathbb{H}}$.

The second case is $b:=(b_1,b_2,b_3)\in\partial\Omega$ when the hyperplane, which contains $b$, is orthogonal to the hyperplane $x_3=0$. We denote that hyperplane by $\Pi$. Because of the orthogonality condition, the hyperplane is parametrized by
\begin{align*}
 \Pi_j:=\{y\in \mathbb{H} | \ (b_1-x_1)(y_1-b_1)+(b_2-x_2)(y_2-b_2)=0\}.
\end{align*}
We use the hyperplane separation theorem again and compute the intersection points of $d_1(x),d_{2}(x)$ with $\Pi_j$ obtaining
\begin{align*}
\frac{1}{d_{1}(x)^{2}}+ \frac{1}{d_{2}(x)^{2}}&\geq \frac{(b_1-x_1)^2}{((b_1-x_1)^2+(b_2-x_2)^2)^2}+\frac{(b_2-x_2)^2}{((b_1-x_1)^2+(b_2-x_2)^2)^2}\\
&= \frac{1}{(b_1-x_1)^2+(b_2-x_2)^2}\geq \frac{1}{\norm{(-b)\boxplus x}^2_{\mathbb{H}}}.
\end{align*}
At that point we use that $b$ was chosen, such that $\delta_{K}(x)=\norm{(-b)\boxplus x}_{\mathbb{H}}$ is fulfilled. Summarizing our estimates we arrive at 
\begin{align*}
\left( \frac{48\sqrt{6}}{c_m}+  1  \right)^{-4/3}\int_{\Omega} \frac{|u(x)|^2}{\delta_K(x)^2} \df x \ \leq 5\int_{\Omega} |\nabla_{\mathbb{H}}\, u(x)|^{2} \df x,
\end{align*}
where Lemma \ref{lemmasharpestimate} finally yields the result.
\end{proof}
\begin{proof}[Proof of inequality {\eqref{thecmconstant}}:] Let us assume, that $c_m>0$ fulfills
\begin{align*}
  c_m^{4/3}  \pi m\sqrt{c_m^2+16} \left( 1+   \frac{c_m}{48\sqrt{6}}  \right)^{2/3}= 16/3.
\end{align*}
 It can be easily seen that
\begin{align*}
  c_m    \leq\frac{4  \sqrt[3]{2}}{\sqrt[3]{m\pi}}\leq \frac{4}{\sqrt[3]{m}}.
\end{align*}
Thus we get the following estimate
 \begin{align*}
  16/3\leq c_m \frac{4^{1/3}}{\sqrt[9]{m}}  \pi m\sqrt{\frac{16}{\sqrt[3]{m^2}}+16} \left( 1+   \frac{1}{\sqrt[3]{m}}\frac{1}{12\sqrt{6}}  \right)^{2/3}\leq  c_m  \pi m^{8/9} 4^{4/3}\sqrt{2} \left( 1+ \frac{1}{12\sqrt{6}}  \right)^{2/3},
\end{align*}
which yields the result.
\end{proof}

\section{\bf Convex polytopes with improved constants}\label{Copoimproved}
In this section we proof that for some open bounded convex polytopes the constant in Theorem \ref{maineins} can be improved. We discuss that behavior in detail for convex cylinders. In the end we show for the optimal constant $c>0$, fulfilling \eqref{ersteungleichung}, that $2\leq c$ holds, which is similar to the setting in the Euclidean case.
\subsection{The improved version}
\begin{assumption}\label{assumpconvexpol}
Let $\Omega$ be an open bounded convex polytope. Let $m\in\mathbb{N}$ denote the number of hyperplanes of $\partial \Omega$, which are not orthogonal to the hyperplane $x_3=0$. We denote these hyperplanes by $\Pi_j$ for $1\leq j \leq m$. Thus there exist  $n_{1,j},n_{2,j},n_{3,j},c_j\in\mathbb{R}$ such that
\begin{align*}
\Pi_j:=\{y\in \mathbb{H} | \ n_{1,j}y_1+n_{2,j}y_2+n_{3,j}y_3=c_j\},
\end{align*}
where $n_{3,j}\neq 0$ for $1\leq j \leq m$. We assume that there exists a constant $a>0$ such that for all $x\in \Omega$ holds
\begin{align}\label{extrassumption}
 (-2n_{2,j}/n_{3,j}+x_1)^2+(2n_{1,j}/n_{3,j}+x_2)^2\geq a|-c/n_3+x_3+x_1n_1/n_3+n_2x_2/n_3|.
\end{align}
\end{assumption}

\begin{thm}\label{assumpconvexpol1}
Under Assumption \ref{assumpconvexpol} it holds
 \begin{align}
\left( \frac{48\sqrt{6}}{a}+  1  \right)^{-4/3}\int_{\Omega}  \frac{|u(x)|^2}{\delta_C(x)^2} \df x \ \leq 4\int_{\Omega} |\nabla_{\mathbb{H}}\, u(x)|^{2} \df x
\end{align}
for all $u\in C_0^\infty(\Omega)$.
\end{thm}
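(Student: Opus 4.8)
The plan is to run the same two-step scheme as in the proof of Theorem \ref{maineins}, but to exploit Assumption \ref{assumpconvexpol} to discard the extra $\tfrac1m\sum_j d_C(x,a_j)^{-2}$ term and thereby keep the constant $4$ from Theorem \ref{mainresultinthree} rather than degrading it to $5$. First I would observe that since $\Omega$ is a convex polytope, the connected components of $H(x)\cap\Omega$ are convex (they are intersections of the convex set $\Omega$ with an affine plane), so Theorem \ref{mainresultinthree} applies and gives
\begin{align*}
\int_{\Omega}\frac{|u(x)|^2}{\omega(x)^2}\,\df x\ \leq\ 4\int_{\Omega}|\nabla_{\mathbb{H}}u(x)|^2\,\df x,\qquad u\in C_0^\infty(\Omega).
\end{align*}
By Theorem \ref{lemmaeuclidC-C} we may identify $\omega(x)=\inf_{y\in\partial\Omega\cap H(x)}\norm{(-y)\boxplus x}_{\mathbb{H}}$, and the whole task reduces to a pointwise lower bound $\omega(x)^{-2}\geq (48\sqrt6/a+1)^{-4/3}\,\delta_C(x)^{-2}$, or equivalently, using Lemma \ref{lemmasharpestimate} to pass from $\delta_C$ to $\delta_K$, an estimate $\omega(x)^{-2}\geq (48\sqrt6/a+1)^{-4/3}\,\delta_K(x)^{-2}$.

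Next I would fix $x\in\Omega$ and let $b\in\partial\Omega$ realize $\delta_K(x)=\norm{(-b)\boxplus x}_{\mathbb{H}}$; such $b$ exists by compactness of $\partial\Omega$ and continuity of $\delta_K$. The point $b$ lies on some face of the polytope, hence on one of its bounding hyperplanes. If that hyperplane is orthogonal to $x_3=0$, then the argument is exactly the ``second case'' of the proof of Theorem \ref{maineins}: the tangent plane $H(x)$ meets such a hyperplane, the hyperplane separation theorem bounds $d_1(x)$ and $d_2(x)$, and one gets directly $\omega(x)^{-2}\geq \|(-b)\boxplus x\|_{\mathbb{H}}^{-2}=\delta_K(x)^{-2}$, which is even stronger than what we need. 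If instead $b$ lies on one of the non-orthogonal hyperplanes $\Pi_j$, then by the separation theorem applied to the convex component of $H(x)\cap\Omega$ (as in the proof of Theorem \ref{mainresultinthree}) together with the computation of the intersection points of the lines $d_1(x),d_2(x)$ with $\Pi_j$, I get
\begin{align*}
\frac{1}{\omega(x)^2}=\frac{1}{d_1(x)^2}+\frac{1}{d_2(x)^2}\ \geq\ \frac14\,\frac{(-2n_{2,j}+x_1n_{3,j})^2+(2n_{1,j}+x_2n_{3,j})^2}{|-c_j+\sprod{x}{n_j}|^2}.
\end{align*}
Now Assumption \ref{assumpconvexpol} guarantees that the ``large'' regime $(-2n_{2,j}/n_{3,j}+x_1)^2+(2n_{1,j}/n_{3,j}+x_2)^2\geq a\,|{-c_j/n_{3,j}}+x_3+x_1n_{1,j}/n_{3,j}+x_2n_{2,j}/n_{3,j}|$ holds for every $x\in\Omega$, so the corresponding branch of Proposition \ref{propohyperplane} applies and yields
\begin{align*}
\frac14\,\frac{(-2n_{2,j}+x_1n_{3,j})^2+(2n_{1,j}+x_2n_{3,j})^2}{|-c_j+\sprod{x}{n_j}|^2}\ \geq\ \left(\frac{48\sqrt6}{a}+1\right)^{-4/3}\left(\inf_{y\in\Pi_j}\norm{(-y)\boxplus x}_{\mathbb{H}}\right)^{-2}.
\end{align*}
Since $b\in\Pi_j$, the infimum over $\Pi_j$ is at most $\norm{(-b)\boxplus x}_{\mathbb{H}}=\delta_K(x)$, so $\omega(x)^{-2}\geq (48\sqrt6/a+1)^{-4/3}\,\delta_K(x)^{-2}$ in this case as well.

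Combining the two cases gives the pointwise bound $\omega(x)^{-2}\geq (48\sqrt6/a+1)^{-4/3}\,\delta_K(x)^{-2}$ for all $x\in\Omega$, and then Lemma \ref{lemmasharpestimate} (specifically $\delta_K(x)\leq\delta_C(x)$, i.e.\ $\delta_K(x)^{-2}\geq\delta_C(x)^{-2}$) upgrades this to $\omega(x)^{-2}\geq (48\sqrt6/a+1)^{-4/3}\,\delta_C(x)^{-2}$; inserting into the Hardy inequality from Theorem \ref{mainresultinthree} finishes the proof. The main point to be careful about — rather than a genuine obstacle — is the bookkeeping in the non-orthogonal case: one must check that the hyperplane separation theorem is being applied to the right convex set (the connected component of $H(x)\cap\Omega$ containing $x$) and that the intersection-point formulas for $d_1(x)$ and $d_2(x)$ with $\Pi_j$ are exactly those already derived in the proof of Theorem \ref{maineins}, so that Proposition \ref{propohyperplane} can be quoted verbatim; the global validity of \eqref{extrassumption} is precisely what removes the need for the $d_C(x,a_j)$ terms and hence for the factor $5$.
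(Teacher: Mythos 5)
Your strategy is essentially the paper's: combine the Hardy inequality with the anisotropic weight from Lemma \ref{pseudodist} (equivalently, the proof of Theorem \ref{mainresultinthree}), the hyperplane separation theorem applied to the face $\Pi_j$ containing the nearest boundary point $b$, Proposition \ref{propohyperplane} (where Assumption \ref{assumpconvexpol} guarantees that only the favourable second branch is ever needed, which is exactly what removes the $d_C(x,a_j)$ terms and the factor $5$), and finally Lemma \ref{lemmasharpestimate} to pass from $\delta_K$ to $\delta_C$. The one step that is wrong as written is the identity $\omega(x)^{-2}=d_1(x)^{-2}+d_2(x)^{-2}$: the proof of Theorem \ref{mainresultinthree} only gives $d_1(x)^{-2}+d_2(x)^{-2}\geq \omega(x)^{-2}$ (at the centre of a disc-shaped component of $H(x)\cap\Omega$ one has $d_1=d_2=\omega$, so the left side is twice the right side), and this inequality points the wrong way for your purpose of bounding $\omega(x)^{-2}$ from below by the separation-theorem quantity. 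The repair is immediate and is what the paper actually does: do not route through $\omega$ at all, but take Lemma \ref{pseudodist} as the starting point, so that the weight to be bounded from below by $\bigl(48\sqrt{6}/a+1\bigr)^{-4/3}\delta_K(x)^{-2}$ is $d_1(x)^{-2}+d_2(x)^{-2}$ itself, for which the computation of the intersection parameters of the two lines with $\Pi_j$ gives precisely your displayed lower bound $\tfrac14\bigl((-2n_{2,j}+x_1n_{3,j})^2+(2n_{1,j}+x_2n_{3,j})^2\bigr)\,|-c_j+\sprod{x}{n_j}|^{-2}$. (One could instead salvage your route by observing that $\omega(x)$ is at most the in-plane distance from $x$ to the line $H(x)\cap\Pi_j$, whose reciprocal square equals the sum of the reciprocal squares of the two intersection parameters, but this is extra work for no gain.) Everything else --- the two cases for $b$, the application of Proposition \ref{propohyperplane}, and the final conversion $\delta_K\leq\delta_C$ --- matches the paper's argument.
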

\begin{proof} 
We use Lemma \ref{pseudodist} to obtain
\begin{align}
\int_{\Omega}\left( \frac{1}{d_{1}(x)^{2}}+ \frac{1}{d_{2}(x)^{2}}   \right)|u(x)|^2 \df x \ \leq 4\int_{\Omega} |\nabla_{\mathbb{H}}\, u(x)|^{2} \df x
\end{align}
for $u\in C_0^\infty(\Omega)$ and proceed in the same way as in the proof of Theorem \ref{maineins}. We treat only the case $b\in \Pi_j$ with $\delta_K(x)=\norm{(-b)\boxplus x}_{\mathbb{H}}$, since the other one is verbatim the same. By $n_j$ we denote the unit normal to $\Pi_j$. Again we use the hyperplane separation theorem and get
\begin{align*}
\frac{1}{d_{1}(x)^{2}}+ \frac{1}{d_{2}(x)^{2}} \geq \frac{1}{4}\frac{(-2n_{2,j}+x_1n_{3,j})^2+(2n_{1,j}+x_2n_{3,j})^2}{|-c_j+\sprod{x}{n_j}|^{2}}.
\end{align*}
Under Assumption \ref{assumpconvexpol} we use Proposition \ref{propohyperplane}, yielding
\begin{align*}
  \frac{1}{4}\frac{(-2n_{2,j}+x_1n_{3,j})^2+(2n_{1,j}+x_2n_{3,j})^2}{|-c_j+\sprod{x}{n_j}|^{2}} \geq    \left( \frac{48\sqrt{6}}{a}+  1  \right)^{-4/3}\left( \inf\limits_{y\in \Pi_j} \norm{(-y)\boxplus x}_{\mathbb{H}}\right)^{-2}.
\end{align*}
Since $b\in \Pi_j$ we use Lemma \ref{lemmasharpestimate} and finish the proof. 
\end{proof}
\begin{remark}The last result can be extended to any convex bounded $\Omega$ as long as there exists a constant $a>0$ such that for any hyperplane, which separates $\Omega$ from points lying on its boundary, holds \eqref{extrassumption}.
\end{remark}

\subsection{Convex cylinders}

 We discuss briefly that there are domains, which fulfill Assumption \ref{assumpconvexpol}. Therefore we consider domains of the form $\Omega=\omega\times(\alpha,\beta)$, where $\omega\subset \mathbb{R}^2$ is a bounded convex domain and $\alpha<\beta$. This domain is not a polytope, but we see that the hyperplanes, which separate the points lying in $b\in \partial\omega \times (\alpha,\beta)$ are orthogonal to the hyperplane $x_3=0$. Thus the proof of Theorem \ref{assumpconvexpol1} goes through and we get;
\begin{corollary}
 Let $\Omega=\omega\times(\alpha,\beta)$ such that $\alpha<\beta$ and  $\omega\subset \mathbb{R}^2$ is a bounded convex domain. For fixed $a>0$ we assume that for all $x\in \Omega$ holds
 $$ x_1^2 +x_2^2\geq a|-\alpha+x_3|, \quad \text{and}  \quad    x_1^2 +x_2^2\geq a|-\beta+x_3|. $$
 Then holds for all $u\in C_0^\infty(\Omega)$
 \begin{align}\label{corinequ}
\left( \frac{48\sqrt{6}}{a}+  1  \right)^{-4/3}\int_{\Omega}  \frac{|u(x)|^2}{\delta_C(x)^2} \df x \ \leq 4\int_{\Omega} |\nabla_{\mathbb{H}}\, u(x)|^{2} \df x.
\end{align}
\end{corollary}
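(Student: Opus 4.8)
The plan is to deduce the Corollary directly from Theorem~\ref{assumpconvexpol1} by verifying that the cylinder $\Omega=\omega\times(\alpha,\beta)$, although not literally a polytope, still satisfies the structural hypotheses needed for that proof to run. First I would observe that $\partial\Omega$ decomposes into the lateral part $\partial\omega\times(\alpha,\beta)$, the bottom face $\omega\times\{\alpha\}$, and the top face $\omega\times\{\beta\}$. The two horizontal faces lie in the hyperplanes $x_3=\alpha$ and $x_3=\beta$, which are \emph{not} orthogonal to $\{x_3=0\}$; these are precisely the hyperplanes playing the role of the $\Pi_j$ in Assumption~\ref{assumpconvexpol}. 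In the notation of Proposition~\ref{propohyperplane} they correspond to $n_1=n_2=0$, $n_3=1$, and $c=\alpha$ respectively $c=\beta$, so that the quantity $-2n_2/n_3+x_1$ equals $x_1$, the quantity $2n_1/n_3+x_2$ equals $x_2$, and $-c/n_3+x_3+x_1n_1/n_3+x_2n_2/n_3$ equals $x_3-\alpha$ (resp.\ $x_3-\beta$). Thus the hypothesis of the Corollary, $x_1^2+x_2^2\ge a|x_3-\alpha|$ and $x_1^2+x_2^2\ge a|x_3-\beta|$ for all $x\in\Omega$, is exactly condition~\eqref{extrassumption} of Assumption~\ref{assumpconvexpol} for these two hyperplanes.

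Next I would check the case analysis in the proof of Theorem~\ref{assumpconvexpol1}. As there, start from Lemma~\ref{pseudodist}, which gives
\begin{align*}
\int_{\Omega}\left( \frac{1}{d_{1}(x)^{2}}+ \frac{1}{d_{2}(x)^{2}}   \right)|u(x)|^2 \df x \ \leq 4\int_{\Omega} |\nabla_{\mathbb{H}}\, u(x)|^{2} \df x
\end{align*}
for all $u\in C_0^\infty(\Omega)$, and fix $b\in\partial\Omega$ realizing $\delta_K(x)=\norm{(-b)\boxplus x}_{\mathbb{H}}$. If $b$ lies on the lateral boundary $\partial\omega\times(\alpha,\beta)$, then by convexity of $\omega$ the separating hyperplane through $b$ is a vertical plane, i.e.\ orthogonal to $\{x_3=0\}$; this is the ``second case'' treated verbatim in the proof of Theorem~\ref{maineins}, where the hyperplane separation theorem applied to $d_1(x)$ and $d_2(x)$ already gives $d_1(x)^{-2}+d_2(x)^{-2}\ge \big((b_1-x_1)^2+(b_2-x_2)^2\big)^{-1}\ge \norm{(-b)\boxplus x}_{\mathbb{H}}^{-2}$, which is even stronger than the claimed bound (the constant $\big(48\sqrt6/a+1\big)^{-4/3}\le 1$). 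If instead $b$ lies on the top or bottom face, then the relevant separating hyperplane is $\{x_3=\alpha\}$ or $\{x_3=\beta\}$, Assumption~\ref{assumpconvexpol} holds by hypothesis, and the argument of Theorem~\ref{assumpconvexpol1} — hyperplane separation to estimate $d_1(x)^{-2}+d_2(x)^{-2}$ from below by $\tfrac14$ times the ratio appearing in Proposition~\ref{propohyperplane}, followed by the first case of Proposition~\ref{propohyperplane} and then Lemma~\ref{lemmasharpestimate} to pass from $\inf_{y\in\Pi_j}\norm{(-y)\boxplus x}_{\mathbb{H}}$ to $\delta_C(x)$ — yields the pointwise bound $\big(48\sqrt6/a+1\big)^{-4/3}\delta_C(x)^{-2}$ on the weight. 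Integrating this pointwise inequality against $|u(x)|^2$ and combining with the Hardy inequality from Lemma~\ref{pseudodist} gives~\eqref{corinequ}.

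The only genuine subtlety — and hence the step I would be most careful about — is that $\Omega$ is not a polytope, so one must confirm that \emph{every} boundary point $b$ of $\Omega$ admits a supporting hyperplane that is either vertical (lateral case) or one of $\{x_3=\alpha\},\{x_3=\beta\}$ (horizontal case), including the ``edge'' points $b\in\partial\omega\times\{\alpha,\beta\}$. At such an edge point there are supporting hyperplanes of both types, and one may simply choose the vertical one and run the lateral case; since convexity of $\omega$ guarantees a supporting line to $\omega$ at each point of $\partial\omega$, the supporting vertical plane to $\Omega$ exists and the hyperplane separation argument applies without change. With this observation the proof of Theorem~\ref{assumpconvexpol1} transfers word for word, and no further computation is needed.
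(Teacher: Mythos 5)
Your proposal is correct and follows essentially the same route as the paper, which simply observes that the lateral boundary points of the cylinder are separated by vertical hyperplanes while the top and bottom faces satisfy Assumption~\ref{assumpconvexpol} by hypothesis, so that the proof of Theorem~\ref{assumpconvexpol1} goes through; you merely spell out the case analysis (and the edge points) in more detail than the paper does. One cosmetic slip: the case of Proposition~\ref{propohyperplane} invoked under Assumption~\ref{assumpconvexpol} is the one with $(-2n_2/n_3+x_1)^2+(2n_1/n_3+x_2)^2\ge a|\cdots|$, which is the \emph{second} displayed estimate there, not the first.
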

\begin{proof}[Proof of Theorem {\normalfont \ref{1approxdom}}:] Let $a>0$ be fixed. We consider the following domain $\Omega_a:= B_{1}(p_a)\times(0,1)$, where $B_{1}(p_a)$  is the two-dimensional Euclidean ball with radius one centered at $p_a:=(\sqrt{a}+1,0)$. The conditions of the last Corollary can be checked easily, where $\alpha=0$ and $\beta=1$. 
Thus the Hardy inequality \eqref{corinequ} holds, where the constant depends on $a>0$.
\end{proof}

\subsection{On the sharp constant}

\begin{thm} \label{lem-sharp} 
Let $\Omega \subset \mathbb{H}$ be a bounded domain.  Then holds
\begin{align} 
\inf\limits_{u\in  C_{0}^{\infty}(\Omega)} \frac{\int_{\Omega} |\nabla_{\mathbb{H}}\, u(x)|^{2} \df x}{\int_{\Omega}  {|u(x)|^{2}}{{\delta_C(x)^{-2} \df x}}} \leq \frac 14\, .
\end{align}
\end{thm}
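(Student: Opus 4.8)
The strategy is the classical one for showing sharpness of a Hardy constant: fix a point $x_0 \in \partial\Omega$ near which $\Omega$ locally looks like a half-space, and build a family of trial functions concentrating at $x_0$ so that the Rayleigh quotient approaches the half-space value $1/4$. The model computation we want to reproduce is the one-dimensional Hardy inequality, whose constant $1/4$ is sharp: for $d(x) := \mathrm{dist}(x,\partial\Omega)$ along a suitable direction, the functions $x \mapsto d(x)^{1/2} \varphi_\eps(d(x))$ with $\varphi_\eps$ a logarithmic cutoff push the ratio to $1/4$. The point is that near a boundary point $\delta_C(x)$ behaves, to leading order, like a one-variable distance along the horizontal line realizing it, and only the derivative in that single direction is forced to be large, while the transverse derivatives can be kept subdominant by choosing the trial function to oscillate only mildly in the transverse directions.

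Concretely, I would first pick $x_0 \in \partial\Omega$ such that a horizontal line segment through $x_0$ lies in $\overline\Omega$ near $x_0$; since $\Omega$ is open and bounded one can always find a boundary point where, say, the vector field $X_1$ (or $X_2$) is transverse, and then $\delta_C(x) \le d_1(x)$ in the notation of Lemma \ref{pseudodist}, with equality to leading order along that segment. After a left translation we may assume $x_0$ is the origin and the relevant horizontal direction is $(1,0,0) = X_1|_0$. Introduce the coordinates $(t,\varphi,\theta)$ of \eqref{neuekoordinaten} adapted to $X_1$, in which $X_1 = \partial_t$ and the volume element is $\tfrac{|\theta|}{2}\,dt\,d\varphi\,d\theta$; in these coordinates the distance $d_1$ to the boundary along the $t$-line is exactly $|t|$ up to the location of the first exit point, as computed in the proof of Lemma \ref{pseudodist}. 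Now take a trial function of the form $u(t,\varphi,\theta) = \rho(t)^{1/2}\,\psi_\eps(\rho(t))\,\chi(\varphi,\theta)$, where $\rho(t)$ is (a smoothing of) the boundary distance in the $t$-direction, $\chi$ is a fixed smooth bump localizing $\varphi,\theta$ to a small neighbourhood of $0$ so that the whole support sits inside $\Omega$, and $\psi_\eps$ is the standard $\eps$-dependent cutoff (e.g. $\psi_\eps(s) = 1$ for $s$ away from $0$ and $s^\eps$-type behaviour near $0$, or a logarithmic truncation) that makes the one-dimensional ratio $\int |\partial_t(\rho^{1/2}\psi_\eps)|^2 / \int \rho^{-1} (\rho^{1/2}\psi_\eps)^2 \to 1/4$ as $\eps \to 0$. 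Then
$$
|\nabla_{\mathbb{H}} u|^2 = |X_1 u|^2 + |X_2 u|^2 = |\partial_t u|^2 + |X_2 u|^2,
$$
and $|\partial_t u|^2 = \chi^2 |\partial_t(\rho^{1/2}\psi_\eps)|^2$ gives exactly the one-dimensional numerator, while the transverse term $|X_2 u|^2$ carries at most the bounded factor $\rho\,\psi_\eps^2 \cdot |X_2\chi|^2$ plus cross terms, which after integration are $O(1)$ and hence negligible against the one-dimensional denominator, which blows up like $|\log\eps|$ (or $\eps^{-1}$) as $\eps \to 0$. Dividing numerator by denominator and letting $\eps \to 0$ yields $\le 1/4$, and replacing $d_1(x)$ by the smaller $\delta_C(x)$ in the denominator only decreases the ratio, so the infimum over $C_0^\infty(\Omega)$ is $\le 1/4$.

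**Main obstacle.** The delicate point is not the one-dimensional Hardy computation, which is standard, but the bookkeeping needed to make the transverse contribution genuinely subdominant and to ensure the trial function lies in $C_0^\infty(\Omega)$: one must choose the $(\varphi,\theta)$-localization small enough (depending on the local geometry of $\partial\Omega$ at $x_0$, in particular on a uniform lower bound for the exit time $a_1(\varphi,\theta)$ and an upper bound for $a_2(\varphi,\theta)$ over that neighbourhood) that $\rho(t)$ is a well-defined smooth positive function comparable to $\delta_C$ there, and that $\rho$, $\partial_t\rho$, and $X_2\rho$ are uniformly bounded on the support. If $\partial\Omega$ is merely the boundary of an open bounded set with no regularity, one should first shrink to a boundary portion that is, say, a graph in the $X_1$-direction (such a portion exists by a simple measure/continuity argument), so that these uniform bounds hold; then the estimate $\int |X_2 u|^2 = O(1)$ while $\int \delta_C^{-2}|u|^2 \to \infty$ closes the argument. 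A smoothing step (replacing the Lipschitz $\rho$ and the piecewise $\psi_\eps$ by $C^\infty$ approximants without changing the limiting ratio) is routine but must be mentioned.
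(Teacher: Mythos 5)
Your strategy (concentration at a boundary point plus sharpness of the one--dimensional Hardy inequality) is genuinely different from the paper's, which instead takes the global trial functions $\tilde u_n=\delta_C^{1/2+1/n}$, uses the a.e.\ eikonal equation $|\nabla_{\mathbb{H}}\delta_C|=1$ from Monti--Serra Cassano and the H\"older equivalence of $d_C$ with the Euclidean distance to get integrability, and thereby computes the Rayleigh quotient exactly as $(1/2+1/n)^2$ with no boundary regularity needed. As written, however, your argument has a concrete gap in the estimate of the transverse term. In the coordinates \eqref{neuekoordinaten} one indeed has $X_1=\partial_t$, but $X_2$ does \emph{not} reduce to a combination of $\partial_\varphi$ and $\partial_\theta$: since $t=2x_3/x_2$, $\varphi=x_1-2x_3/x_2$, $\theta=x_2$, a direct computation gives
\begin{align*}
X_2=-\frac{2t+\varphi}{\theta}\,\partial_t+\frac{2t+\varphi}{\theta}\,\partial_\varphi+\partial_\theta ,\qquad \frac{2t+\varphi}{\theta}=\frac{2x_3+x_1x_2}{x_2^2}.
\end{align*}
Hence for $u=\rho(t)^{1/2}\psi_\eps(\rho(t))\chi(\varphi,\theta)$ the term $X_2u$ contains $-\frac{2t+\varphi}{\theta}\,\partial_t\bigl(\rho^{1/2}\psi_\eps\bigr)\chi$, which has exactly the same singularity as the main term $\partial_tu$; after integration it is comparable to $c^2$ times the one--dimensional numerator, with $c=\sup_{\supp\chi}|(2t+\varphi)/\theta|$, not $O(1)$ as you claim. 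Equivalently, $|X_1d_1|=1$ but $|\nabla_{\mathbb{H}}d_1|^2=1+|X_2d_1|^2$, and $X_2d_1$ does not vanish, nor decay near $\partial\Omega$, in general. Your computation therefore only yields $\inf\le\tfrac14(1+c^2)$. Note also that localizing $(\varphi,\theta)$ near $0$, as you propose, is the worst choice, since the coefficient $(2t+\varphi)/\theta$ blows up as $\theta\to0$.

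To close the gap you would need a second, nested limit: shrink the support of $\chi$ around a boundary point and a horizontal direction (after a left translation and a rotation about the $x_3$-axis) at which the coefficient $X_2 d_1$ can be made uniformly small on the support, and then let the support shrink before or together with $\eps\to0$. For an arbitrary bounded open $\Omega$, whose boundary carries no regularity whatsoever, the existence of such a boundary portion --- your ``graph in the $X_1$-direction'' with uniform control on exit times and on $X_2\rho$ --- is asserted but not proved, and it is not automatic. This is precisely the difficulty the paper's choice of $\delta_C^{1/2+1/n}$ avoids, at the modest price of having to verify the weak differentiability of $\delta_C$ with respect to $X_1,X_2$ and a density argument.
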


\smallskip

\begin{proof} It suffices to construct a sequence $u_n\in  C_{0}^{\infty}(\Omega)$ such that
 \begin{align} \label{enough-1}
\lim\limits_{n\to\infty} \frac{\int_{\Omega} |\nabla_{\mathbb{H}}\, u_n(x)|^{2} \df x}{\int_{\Omega}  {|u_n(x)|^{2}}{{\delta_C(x)^{-2} \df x }}} =\frac{1}{4}.
\end{align}
To this end we consider the sequence 
$$
\tilde{u}_n(x)=\delta_C(x)^{1/2+1/n}, \qquad n\in \mathbb{N}.
$$
and recall that $\delta_C(x)$ satisfies the Eikonal equation 
\begin{align} \label{eikonal}
|\nabla_{\mathbb{H}}\, \delta_C(x)|^2=1 , \quad \text{for a.e.} \ x \in {\Omega},
\end{align}
see \cite[Thm 3.1]{montiii}. Moreover, from  \cite{gromov} we know that 
\begin{align}\label{heisenbergeuclideanballs}
M \norm{x-y}_e \leq d_C(x,y) \leq M^{-1} \norm{x-y}_e^{1/2}\, .
\end{align}
holds for some $M>0$ and all $x,y \in\overline{\Omega}$. Hence the integral $\int_{\Omega} \delta_C(x)^{2/n-1} \df x<\infty$, and using \eqref{eikonal} we easily find that 
\begin{align} \label{enough-2}
\frac{\int_{\Omega} |\nabla_{\mathbb{H}}\, \tilde u_n(x)|^{2} \df x}{\int_{\Omega}  {|\tilde u_n(x)|^{2}}{{\delta_C(x)^{-2} \df x}}} = \left(\frac 12 +\frac 1n\right)^2\, \qquad \forall \ n\in\N.
\end{align}
Next we will show that $\delta_C$  is weakly differentiable with respect to $X_1$ and $X_2$ on $\Omega$. Without loss of generality we consider only the case $X_1$. Let $u\in C_{0}^\infty(\Omega)$ be given. We must show
\begin{align} \label{must-show}
\int_{\Omega}X_{1}u(x)\, \delta_C(x) \df x = -\int_{\Omega}u(x)X_1\, \delta_C(x) \df x.
\end{align}
Since we can extend these functions to the whole space we can integrate over $\mathbb{R}^3$. An application of the dominated convergence theorem yields then
\begin{align}
\int_{\mathbb{R}^3}X_{1}u(x)\delta_C(x) \df x \ = \ \lim_{h \to 0}\left(\int_{\mathbb{R}^3}\frac{u(x+h\tilde{x})}{h}\delta_C(x) \df x-\int_{\mathbb{R}^3}\frac{u(x)}{h}\delta_C(x) \df x \right),
\end{align}
where $\tilde{x}:=(1,0,x_2/2)$. We make the change of variables  $x+h\tilde{x} \mapsto x$ to obtain 
\begin{align*}
\int_{\mathbb{R}^3}X_{1}u(x)\delta_C(x) \df x \ &= \ \lim_{h \to 0}\left( \int_{\mathbb{R}^3}{u(x)}\frac{\delta_C(x-h\tilde{x})}{h} \df y-\int_{\mathbb{R}^3}\frac{u(x)}{h}\delta_C(x) \df y \right)\\
&= \ -\lim_{h \to 0}\left( \int_{\mathbb{R}^3}{u(x)}\frac{\delta_C(x-h\tilde{x})-\delta_C(x)}{-h} \df y\right).\\
\end{align*}
Since  any two points lying in $\mathbb{H}$ can be connected by a (not necessarily unique) geodesic, see \cite{monti}, we can easily deduce
\begin{align}\label{asdasdasdasfdfd}
 |\delta_C(x)-\delta_C(y)| \leq d_C(x,y) , \quad \text{for all} \ x,y \in \mathbb{H}.
\end{align}
Taking that inequality and the application of the left-invariance of the C-C distance we get $d_C(x-h\tilde{x},x)=d_C(-he_1,0)=|h|$, where $e_1:=(1,0,0) \in \mathbb{H}$. Hence we may apply the dominated convergence theorem again arriving at
\begin{align*}
\int_{\mathbb{R}^3}X_{1}u(x)\, \delta_C(x) \df x \ = \ - \int_{\mathbb{R}^3}{u(x)}\lim_{h \to 0}\frac{\delta_C(x-h\tilde{x})-\delta_C(x)}{-h} \df y.
\end{align*}
This limit exists almost everywhere on $\mathbb{H}$, see \cite{montiii}, since $\delta_C(x)$ fulfills \eqref{asdasdasdasfdfd}. This proves \eqref{must-show}, and therefore it follows that $\delta_{C}$ is weakly differentiable on $\Omega$ with respect to $X_1$. The case of $X_2$ is treated in the same way. 

At this point it can be shown by a standard argument that $\delta_C$ can be approximated by $C_0^{\infty}(\Omega)$ functions and that the same is true for the sequence $\tilde u_n$.
\end{proof}

  
\section*{\bf Aknowledgements}
B.~R. was supported by the German Science Foundation through the Research Training Group 1838: \textit{Spectral Theory and Dynamics of Quantum Systems.} The authors would like to thank Simon Larson for many fruitful discussions.


\frenchspacing


\end{document}